\newtheorem{thm}{Theorem}[section]
\newtheorem{cor}[thm]{Corollary}
\newtheorem{lem}[thm]{Lemma}
\newtheorem{prop}[thm]{Proposition}
\theoremstyle{definition}
\newtheorem{defin}[thm]{Definition}
\newtheorem{rem}[thm]{Remark}
\newtheorem{exa}[thm]{Example}
\numberwithin{equation}{section}
\begin{document}

\baselineskip=17pt

\title[A note on a new ideal]{A note on a new ideal}

\author[A. Kwela]{Adam Kwela}
\thanks{The author was supported
  by the NCN (Polish National Science Centre) grant no.
  2012/07/N/ST1/03205 and by WCMCS (Warsaw Center of Mathematics and Computer Science).}
\address{Institute of Mathematics\\ University of Gda\'nsk\\Wita Stwosza 58, 80-952 Gda\'nsk,
Poland}
\email{Adam.Kwela@ug.edu.pl}

\date{}

\begin{abstract}
In this paper we study a new ideal $\mathcal{WR}$. The main result is the following: an ideal is not weakly Ramsey if and only if it is above $\mathcal{WR}$ in the Katětov order. Weak Ramseyness was introduced by Laflamme in order to characterize winning strategies in a certain game. We apply result of Natkaniec and Szuca to conclude that $\mathcal{WR}$ is critical for ideal convergence of sequences of quasi-continuous functions. We study further combinatorial properties of $\mathcal{WR}$ and weak Ramseyness. Answering a question of Filipów et al. we show that $\mathcal{WR}$ is not $2$-Ramsey, but every ideal on $\omega$ isomorphic to $\mathcal{WR}$ is Mon (every sequence of reals contains a monotone subsequence indexed by a $\mathcal{I}$-positive set). 
\end{abstract}

\keywords{Ideals; Weakly Ramsey ideals; Ideal convergence; Infinite games; Quasi-continuous functions}

\maketitle

\section{Introduction}

A collection $\mathcal{I}\subset\mathcal{P}(X)$ is an \emph{ideal on $X$} if it is closed under finite unions and subsets. We additionally assume that $\mathcal{P}(X)$ is not an ideal and each ideal contains $\mathbf{Fin}=[X]^{<\omega}$. In this paper $X$ will always be a countable set. Ideal is \emph{dense} if every infinite set contains an infinite subset belonging to the ideal. The \emph{filter dual to the ideal $\mathcal{I}$} is the collection $\mathcal{I}^*=\left\{A\subset X:A^c\in\mathcal{I}\right\}$ and $\mathcal{I}^+=\left\{A\subset X:A\notin\mathcal{I}\right\}$ is the collection of all \emph{$\mathcal{I}$-positive sets}. If $Y\notin\mathcal{I}$, we can define the restriction of $\mathcal{I}$ to the set $Y$ as $\mathcal{I}\upharpoonright Y=\left\{A\cap Y:A\in\mathcal{I}\right\}$. We say that a family $\mathcal{G}$ \emph{generates the ideal $\mathcal{I}$} if $$\mathcal{I}=\left\{A:\exists_{G_0,\ldots,G_k\in\mathcal{G}}A\subset G_0\cup\ldots\cup G_k\right\}.$$
Ideals $\mathcal{I}$ and $\mathcal{J}$ are \emph{isomorphic} if there is a bijection $f:\bigcup\mathcal{J}\rightarrow\bigcup\mathcal{I}$ such that 
$$A\in\mathcal{I} \Leftrightarrow f^{-1}[A]\in\mathcal{J}.$$
For simplicity we denote $\sum (i,j)=i+j$ for $(i,j)\in\omega\times\omega$. In the entire paper $\textrm{proj}_1$ ($\textrm{proj}_2$) is the projection on the first (second) coordinate, i.e., $\textrm{proj}_i\colon\omega\times\omega\to\omega$ is given by $\textrm{proj}_i(x_1,x_2)=x_i$, for $i=1,2$.

The structure of ideals on countable sets is often described in terms of orders. We say that \emph{$\mathcal{I}$ is below $\mathcal{J}$ in the Katětov order} ($\mathcal{I}\leq_{K}\mathcal{J}$) if there is $f:\bigcup\mathcal{J}\rightarrow\bigcup\mathcal{I}$ such that
$$A\in\mathcal{I}\Rightarrow f^{-1}[A]\in\mathcal{J}.$$
If $f$ is a bijection between $\bigcup\mathcal{J}$ and $\bigcup\mathcal{I}$, we say that \emph{$\mathcal{J}$ contains an isomorphic copy of $\mathcal{I}$} ($\mathcal{I}\sqsubseteq\mathcal{J}$). Relations between $\leq_{K}$ and $\sqsubseteq$ were studied in detail in \cite{Katetov}. If $\mathcal{I}$ is a dense ideal, then $\mathcal{I}\sqsubseteq\mathcal{J}$ if and only if there is a $1-1$ function $f:\bigcup\mathcal{J}\rightarrow\bigcup\mathcal{I}$ such that $f^{-1}[A]\in\mathcal{J}$ for all $A\in\mathcal{I}$ (cf. \cite{Katetov} and \cite{Farkas}).

Ideals $\mathcal{I}$ and $\mathcal{J}$ are \emph{$\sqsubseteq$-equivalent}, if $\mathcal{I}\sqsubseteq\mathcal{J}$ and $\mathcal{J}\sqsubseteq\mathcal{I}$. Obviously, two isomorphic ideals are $\sqsubseteq$-equivalent. The converse does not hold: for instance consider
$$\mathbf{Fin}\otimes \emptyset=\{A\subseteq\omega\times\omega:\{n\in\omega:A_n\neq\emptyset\}\in\mathbf{Fin}\}$$
and
$$\mathcal{P}(\omega)\oplus\mathbf{Fin}=\{A\subseteq\{0,1\}\times\omega:\{n\in\omega:(1,n)\in A\}\in\mathbf{Fin}\}.$$
One can easily see that those ideals are $\sqsubseteq$-equivalent but not isomorphic.

In this paper we introduce a new ideal on $\omega\times\omega$.

\begin{defin}
$\mathcal{WR}$ is an ideal on $\omega\times\omega$ generated by vertical lines (which we call \emph{generators of the first type}) and sets $G$ such that for every $(i,j),(k,l)\in G$ either $i>k+l$ or $k>i+j$ (which we call \emph{generators of the second type}). Equivalently, $\mathcal{WR}$ is generated by homogeneous subsets of the coloring $\lambda\colon \left[\omega\times\omega\right]^2\to 2$ given by:
$$\lambda\left(\left\{\left(i,j\right),\left(k,l\right)\right\}\right)=\left\{\begin{array}{ll}
0 & \mbox{\boldmath{if }} k>i+j\\
1 & \mbox{\boldmath{if }} k\leq i+j\\
\end{array}\right.$$
for all $\left(i,j\right)$ below $\left(k,l\right)$ in the lexicographical order.
\end{defin}

The space $2^X$ of all functions $f:X\rightarrow 2$ is equipped with the product topology (each space $2=\left\{0,1\right\}$ carries the discrete topology). We treat $\mathcal{P}(X)$ as the space $2^X$ by identifying subsets of $X$ with their characteristic functions. All topological and descriptive notion in the context of ideals on $X$ will refer to this topology. A map $\phi:\mathcal{P}(X)\rightarrow[0,\infty]$ is a \emph{submeasure on $X$} if $\phi(\emptyset)=0$ and $\phi(A)\leq\phi(A\cup B)\leq\phi(A) + \phi(B)$, for all $A,B\subset X$. It is \emph{lower semicontinuous} if additionally $\phi(A) = \lim_{n\rightarrow\infty} \phi(A\cap \{x_0,\ldots,x_n\})$, where $X=\{x_0,x_1,\ldots\}$ is an enumeration of the set $X$. Mazur proved in \cite{Mazur} that $\mathcal{I}\in\bf{\Sigma^{0}_{2}}$ if and only if $\mathcal{I}=\mathbf{Fin}(\phi)=\left\{A\subset X:\phi(A)<\infty\right\}$ for some lower semicontinuous submeasure $\phi$.

Notice that the submeasure $\phi$ on $\omega\times\omega$ given by
$$\begin{array}{rcl}
\phi(A) & = & \inf\left\{\left|\mathcal{C}\right|:A\subset\bigcup\mathcal{C}\textrm{ and each }C\in\mathcal{C}\textrm{ is either a generator}\right.\\ 
& & \left.\textrm{of the first or of the second type of the ideal }\mathcal{WR}\right\}\end{array}$$
is lower semicontinuous and $\mathcal{WR}=\mathbf{Fin}(\phi)$. Hence $\mathcal{WR}$ is $\bf{\Sigma^{0}_{2}}$. 

We prove that $\mathcal{WR}$ is a critical ideal for weak Ramseyness. To define the latter notion we need some additional notation. If $s\in\omega^{<\omega}$, i.e., $s=\left(s(0),\ldots,s(k)\right)$ is a finite sequence of natural numbers, then by ${\rm lh}(s)$ we denote its \emph{length}, i.e., $k+1$. If $s,t\in\omega^{<\omega}$ and ${\rm lh}(s)\leq{\rm lh}(t)$, then we write $s\preceq t$ if $s(i)=t(i)$ for all $i=0,\ldots,{\rm lh}(s)-1$. We assume that $\emptyset$ is a sequence of length $0$ and $\emptyset\preceq t$ for each $t\in\omega^{<\omega}$. \emph{Concatenation of sequences $s$ and $t$} is the sequence
$$s^\frown t=(s(0),\ldots,s({\rm lh}(s)-1),t(0),\ldots,t({\rm lh}(t)-1)),$$
where $s=(s(0),\ldots,s({\rm lh}(s)-1))$ and $t=(t(0),\ldots,t({\rm lh}(t)-1))$. A set $T\subset\omega^{<\omega}$ is a \emph{tree} if for each $s\in T$ and $t\in\omega^{<\omega}$ such that $t\preceq s$, we have $t\in T$. A \emph{branch of a tree $T$} is a function $b:\omega\rightarrow\omega$ such that $\left(b(0),\ldots,b(k)\right)\in T$ for all $k\in\omega$. We sometimes identify branch $b$ with the set of all finite sequences of the form $\left(b(0),\ldots,b(k)\right)$ for $k\in\omega$ and therefore a branch can be treated as a subset of $T$. Recall also that a \emph{ramification of a tree $T\subset\omega^{<\omega}$ at $s\in T$} is the set $\left\{n\in\omega:s^\frown\left(n\right)\in T\right\}$.

\begin{defin}[cf. \cite{Laflamme}]
An ideal $\mathcal{I}$ on $X$ is \emph{weakly Ramsey} if for every tree $T\subset X^{<\omega}$ with all ramifications in $\mathcal{I}^*$ there is a $\mathcal{I}$-positive branch.
\end{defin}

We give the above definition following Laflamme. Note that the same name is used in \cite{Grigorieff} for a slightly different notion, which occurs to be equivalent to weak selectiveness (recall that an ideal $\mathcal{I}$ on $X$ is \emph{weakly selective} if every partition $(X_n)_{n\in\omega}$ of $X$ with at most one element not in $\mathcal{I}$ and such that $\bigcup_{m\geq n}X_m\notin\mathcal{I}$, for each $n\in\omega$, has a $\mathcal{I}$-positive selector). As we prove in Section $4$, weak Ramseyness and weak selectiveness do not coincide.

Weak Ramseyness was introduced by Laflamme in the context of an infinite game $G(\mathcal{I})$ in which Player I in his $n$-th move picks a set $X_n\in\mathcal{I}$ and Player II responds with $k_n\notin X_n$. Player I wins in $G(\mathcal{I})$ if $\{k_n:n\in\omega\}$ belongs to $\mathcal{I}$. Otherwise Player II wins. Laflamme proved that Player I has a winning strategy in $G(\mathcal{I})$ if and only if $\mathcal{I}$ is not weakly Ramsey. The game $G(\mathcal{I})$ was applied for instance in \cite{Hrusak} by Hru\v s\'ak in the proof of his Category Dichotomy (see also \cite{Meza}) and in \cite{KwelaSabok} for characterizing coanalytic weakly selective ideals. Recently Natkaniec and Szuca in \cite{Natkaniec} used this game in the context of ideal convergence (we discuss their result at the end of this section). 

We show that $\mathcal{WR}$ is critical for weak Ramseyness in the following sense:

\begin{thm}
\label{MainTheorem}
TFAE:
\begin{enumerate}
\item	$\mathcal{I}$ is not weakly Ramsey;
\item $\mathcal{WR}\sqsubseteq\mathcal{I}$;
\item $\mathcal{WR}\leq_{K}\mathcal{I}$.
\end{enumerate}
\end{thm}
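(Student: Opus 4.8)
The plan is to prove the cycle $(2)\Rightarrow(3)\Rightarrow(1)\Rightarrow(2)$. The implication $(2)\Rightarrow(3)$ is immediate: a bijection witnessing $\mathcal{WR}\sqsubseteq\mathcal{I}$ is in particular a function witnessing $\mathcal{WR}\leq_{K}\mathcal{I}$. Before the other two implications it is convenient to record a reformulation of the generators: associating with each point $(i,j)$ the finite interval $[i,\sum(i,j)]$ identifies $\omega\times\omega$ with the set of nonempty finite intervals of $\omega$, under which a vertical line becomes the family of intervals with a fixed left endpoint, while a set is a generator of the second type exactly when the associated intervals are pairwise disjoint (for $(i,j)$ below $(k,l)$ lexicographically, $\lambda=0$ says $k>i+j$, i.e. $[k,k+l]$ lies strictly to the right of $[i,i+j]$). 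For the later use of the dense-ideal characterization of $\sqsubseteq$ recalled in the introduction, I will also first note that $\mathcal{WR}$ is dense: an infinite $B\subseteq\omega\times\omega$ either meets some column infinitely, and then contains an infinite subset of a vertical line, or it meets infinitely many columns, in which case one greedily selects points whose $\textrm{proj}_1$ jumps past $\sum$ of the previously chosen one, producing an infinite second-type generator inside $B$.

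For $(3)\Rightarrow(1)$ I will exhibit, from a witness $f\colon\bigcup\mathcal{I}\to\omega\times\omega$ of $\mathcal{WR}\leq_{K}\mathcal{I}$ (so $A\in\mathcal{WR}\Rightarrow f^{-1}[A]\in\mathcal{I}$), an explicit winning strategy for Player I in the game $G(\mathcal{I})$; by Laflamme's theorem this yields that $\mathcal{I}$ is not weakly Ramsey. The strategy is: at stage $n$, having seen the responses $k_0,\dots,k_{n-1}$, Player I plays $X_n=f^{-1}[\{(i,j):i\leq M_n\}]$, where $M_n=\max_{m<n}\sum f(k_m)$ (and $M_0=0$). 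Since $\{(i,j):i\leq M_n\}$ is a finite union of vertical lines it lies in $\mathcal{WR}$, hence $X_n\in\mathcal{I}$; as $\bigcup\mathcal{I}\notin\mathcal{I}$ this is a legal move with a legal reply available. A response $k_n\notin X_n$ forces $\textrm{proj}_1 f(k_n)>M_n\geq\sum f(k_m)$ for every $m<n$, so the points $f(k_n)$ have strictly increasing first coordinates and satisfy the jump condition defining $\lambda=0$; thus $\{f(k_n):n\in\omega\}$ is a second-type generator, whence it belongs to $\mathcal{WR}$ and $\{k_n:n\in\omega\}\subseteq f^{-1}[\{f(k_n):n\in\omega\}]\in\mathcal{I}$. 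Hence Player I wins. Taking $\mathcal{I}=\mathcal{WR}$ and $f=\textrm{id}$ this already shows that $\mathcal{WR}$ itself is not weakly Ramsey.

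The substance of the theorem is $(1)\Rightarrow(2)$. Fix $X=\bigcup\mathcal{I}$ and, since $\mathcal{I}$ is not weakly Ramsey, a tree $T\subseteq X^{<\omega}$ all of whose ramifications lie in $\mathcal{I}^*$ and all of whose branches have $\mathcal{I}$-small range. Because $\mathcal{WR}$ is dense, by the cited characterization it suffices to build a one-to-one $f\colon X\to\omega\times\omega$ with $f^{-1}[A]\in\mathcal{I}$ for every $A\in\mathcal{WR}$; as $\mathcal{I}$ is closed under subsets and finite unions it is enough to arrange this for the two kinds of generators, i.e. (in the interval picture) that each fixed-left-endpoint fibre is $\mathcal{I}$-small and that every set whose image is a pairwise-disjoint family of intervals is $\mathcal{I}$-small. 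The guiding idea is to label the nodes of $T$ by intervals so that disjointness of two labels forces the corresponding elements onto a common branch of $T$: then a set mapped to a pairwise-disjoint family becomes a chain, hence is contained in one branch and is $\mathcal{I}$-small, and the fixed-left-endpoint fibres are controlled in the same way. Concretely I will construct, by a fusion recursion descending $T$ that keeps every used ramification in $\mathcal{I}^*$, a labelling in which the intervals spread strictly to the right as one goes down a branch while incomparable labels are forced to overlap.

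The main obstacle is that the witnessing tree cannot be taken to have pairwise distinct node values: each ramification is the complement of an $\mathcal{I}$-set, so any two of them meet, and values necessarily recur throughout $T$. Consequently $f$ must be produced as a single-valued, total, injective function on $X$ even though each element occurs at many nodes, and the two smallness requirements must hold simultaneously. The intended resolution is a bookkeeping that commits each element of $X$ to one canonical interval at the first stage at which it is encountered, continuing the recursion by noting that the relevant ramifications remain in the filter $\mathcal{I}^*$ after deleting the finitely many values committed so far; the real work is verifying that the spreading/overlapping conditions survive this identification of recurring values. Finally, since already the root ramification $R_\emptyset\in\mathcal{I}^*$ is co-$\mathcal{I}$-small, every element outside $R_\emptyset$ lies in an $\mathcal{I}$-set, so once all values occurring in $T$ have been committed the set of unlabelled elements is in $\mathcal{I}$ and may be mapped injectively into fresh cells without affecting either smallness condition. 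This produces the required $f$ and hence $\mathcal{WR}\sqsubseteq\mathcal{I}$, closing the cycle.
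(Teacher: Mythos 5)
Your implications (2)$\Rightarrow$(3) and (3)$\Rightarrow$(1) are correct, and your interval picture of the generators (second-type generators correspond exactly to pairwise disjoint intervals $[i,i+j]$) is accurate. Your (3)$\Rightarrow$(1) is also a genuinely different route from the paper's: you construct an explicit winning strategy for Player I in $G(\mathcal{I})$ and invoke Laflamme's theorem as a black box, whereas the paper stays self-contained, pulling the coloring $\lambda$ back along $f$ and applying condition (2) of Proposition \ref{EquivalentConditions}. Both work. The problem is (1)$\Rightarrow$(2), which, as you say yourself, is the substance of the theorem: what you offer there is a plan rather than a proof, and the plan breaks at its central inference.

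That inference is: ``disjointness of two labels forces the corresponding elements onto a common branch; then a set mapped to a pairwise-disjoint family becomes a chain, hence is contained in one branch.'' Pairwise membership in common branches is far weaker than containment in a single branch. To conclude that a set $\{x_0,x_1,x_2,\dots\}$ with pairwise disjoint labels (listed left to right) is $\mathcal{I}$-small, you need $x_{k+1}$ to lie in the ramification of $T$ at the \emph{node} $(x_0,\dots,x_k)$ --- a condition on each whole initial segment, not on pairs. A single interval committed to each element ``at the first stage at which it is encountered'' cannot encode this: since every value recurs at infinitely many nodes, making the pairwise condition imply the sequential one would force $y$ (whenever its label lies to the right of that of $x$) into the intersection of the ramifications at \emph{infinitely many} nodes involving $x$, and an infinite intersection of sets from $\mathcal{I}^*$ need not be in $\mathcal{I}^*$ (it can be empty). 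Your observation that ramifications stay in $\mathcal{I}^*$ after deleting finitely many committed values is true but does not address this. This pairwise-versus-sequential gap is precisely what the paper's two technical devices exist for: first, the monotonization inside the proof of (3)$\Rightarrow$(1) of Proposition \ref{EquivalentConditions} (replace $X_t$ by its intersection with the finitely many $X_s$ with ${\rm lh}(s)\leq{\rm lh}(t)$ and $\max s\leq\max t$), which converts the tree into a partition $(X_n)_{n\in\omega}\subset\mathcal{I}$ for which jumping past the integer \emph{value} of the previous element already implies membership in all relevant ramifications; second, the ideals $\mathcal{WR}^\pi$ of Definition \ref{QC-pomocnicze} together with Lemma \ref{lemat}, which are needed because $\mathcal{WR}$'s generators jump past the sum of coordinates $i+j$ while the partition condition requires jumping past the position of the image point in $\omega$, and reconciling these two notions of ``jump'' is the long and delicate content of Lemma \ref{lemat}. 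Nothing in your sketch plays the role of either step (nor does anything make the fibre of a fixed left endpoint $\mathcal{I}$-small; in the paper those fibres are exactly the partition pieces $X_n\in\mathcal{I}$). Your sentence ``the real work is verifying that the spreading/overlapping conditions survive this identification of recurring values'' names precisely the missing content --- it is not bookkeeping, it is the theorem.
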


Therefore Player I has a winning strategy in $G(\mathcal{I})$ if and only if $\mathcal{WR}\leq_{K}\mathcal{I}$ if and only if $\mathcal{WR}\sqsubseteq\mathcal{I}$. Since $\mathcal{WR}$ is dense (cf. Lemma \ref{dense}), each ideal which is not dense, has to be weakly Ramsey (this also follows from part $2$ of Proposition \ref{EquivalentConditions} and Ramsey Theorem).

To look closer at weak Ramseyness recall that $\mathcal{I}$ is \emph{selective} if for every partition $(X_n)_{n\in\omega}$ of $X$ such that $\bigcup_{m\geq n}X_m\notin\mathcal{I}$, for each $n\in\omega$, there is a $\mathcal{I}$-positive selector. Mathias in \cite{Mathias}, where instead of "selective ideal" the name "happy family" is used, proved that no analytic or coanalytic selective ideal is dense. He also showed that any ideal generated by an almost disjoint family is selective. In particular, any countably generated ideal is selective. On the other hand, Todor\u cevi\'c in \cite{Todorcevic} found an example of an analytic selective ideal which is not generated by an almost disjoint family. Zakrzewski in \cite{Zakrzewski} proved that all analytic P-ideals which are not countably generated, are not selective.

Ideal $\mathcal{I}$ is \emph{locally selective} if every partition $(X_n)_{n\in\omega}\subset\mathcal{I}$ of $X$ has a $\mathcal{I}$-positive selector. Weak selectiveness and local selectiveness were introduced in \cite{BTW} in order to generalize the notion of selective maximal ideals or ultrafilters. Later they were investigated for instance in \cite{Hrusak}, \cite{Ramsey} and \cite{Meza}.

It occurs that weak Ramseyness is between weak selectiveness and local selectiveness. Namely:

\begin{center}
selective $\implies$ weakly selective $\implies$ weakly Ramsey $\implies$ locally selective
\end{center}

For a maximal ideal all four properties coincide. Moreover, they correspond to well-known selectivity of maximal ideals or ultrafilters. However, none of the above implications can be reversed. Especially surprising may be the fact that weak Ramseyness does not coincide with local selectiveness. We discuss it in Sections $3$ and $4$. 

Local selectiveness can be characterized by an ideal $\mathcal{ED}$ on $\omega\times\omega$ generated by vertical lines and graphs of functions from $\omega$ to $\omega$, i.e., $$\mathcal{ED}=\left\{A\subset\omega\times\omega:\exists_{n,m\in\omega}\forall{k>n}\left|\left\{i\in\omega:(k,i)\in A\right\}\right|\leq m\right\}.$$
The mentioned characterization is the following: $\mathcal{I}$ is not locally selective if and only if $\mathcal{ED}\sqsubseteq\mathcal{I}$ if and only if $\mathcal{ED}\leq_{K}\mathcal{I}$ (cf. \cite{Katetov} and \cite{Meza}).

There are known other results with the same structure as Theorem \ref{MainTheorem} (in the sense that some ideal is critical for a combinatorial property through some order on ideals). Besides the one concerning local selectiveness and mentioned in Theorem \ref{1} $\mathbf{Fin}\otimes \mathbf{Fin}$ and weak P-ideals (this part of the theorem is actually straightforward) there is also a famous result of Solecki from \cite{SoleckiRB}: $\mathcal{I}$ is an analytic P-ideal which is not $\bf{\Sigma^0_2}$ if and only if it is above $\emptyset\otimes \mathbf{Fin}$ in the Rudin-Blass order, where 
$$A\in\emptyset\otimes \mathbf{Fin} \Longleftrightarrow \forall_{n\in\omega}\{m\in\omega:(n,m)\in A\}\in \mathbf{Fin}.$$

We present two applications of $\mathcal{WR}$. Following Filipów et al. (cf. \cite{Ramsey}) we say that an ideal $\mathcal{I}$ on $\omega$ is \emph{Mon} if for every sequence of reals $\left(x_n\right)_{n\in\omega}$ there is $M\notin\mathcal{I}$ with $\left(x_n\right)_{n\in M}$ monotone. An ideal $\mathcal{I}$ on $X$ is \emph{$k$-Ramsey} if for every coloring $f:[X]^2\rightarrow k$, there is $H\notin\mathcal{I}$ with $f\upharpoonright[H]^2$ constant. $\mathcal{I}$ is \emph{Ramsey} if it is $k$-Ramsey for some $k\in\omega$. It is easy to see that $2$-Ramsey implies Mon. Naturally, $\mathcal{WR}$ fails to be $2$-Ramsey as witnessed by the coloring $\lambda$. In \cite{Ramsey} authors asked about existence of a Mon ideal which is not Ramsey. Solution of this problem follows from \cite{Meza}, where Meza-Alc\'antara gave an example of a $2$-Ramsey ideal which is not $3$-Ramsey. The question about existence of a Mon ideal which is not $2$-Ramsey was still open. We show that $\mathcal{WR}$ is such an ideal. More precisely, we prove the following theorem.

\begin{thm}
\label{Mon}
Every ideal on $\omega$ isomorphic to $\mathcal{WR}$ is Mon.
\end{thm}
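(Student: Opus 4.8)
The plan is to fix a bijection $h\colon\omega\to\omega\times\omega$ witnessing $\mathcal{I}\cong\mathcal{WR}$ together with an arbitrary sequence of reals $(x_n)_{n\in\omega}$, and to transport everything to $\omega\times\omega$. Writing $p_n=h(n)$, I get an enumeration $p_0,p_1,\dots$ of $\omega\times\omega$ of order type $\omega$ together with reals $a_n=x_n$ attached to its terms, and a set $M\subseteq\omega$ does the job exactly when $(a_n)_{n\in M}$ is monotone while $\{p_n:n\in M\}\in\mathcal{WR}^+$. Thus the statement reduces to the combinatorial core: \emph{for any enumeration of $\omega\times\omega$ of type $\omega$ and any assignment of reals to its terms there is a monotone subsequence whose underlying set of points is $\mathcal{WR}$-positive.} Note that the index order (which governs monotonicity) is completely unrelated to the coordinate structure of $\omega\times\omega$ (which governs positivity), and this mismatch is the source of all difficulty.

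Positivity will be secured through the coloring $\lambda$. The key observation is that a generator of the second type (a set $\lambda$-homogeneous in colour $0$) meets any set $\lambda$-homogeneous in colour $1$ in at most one point, since a common pair would have to carry both colours; and a single column is $\lambda$-homogeneous in colour $1$. Consequently, if a set $N$ contains, for every $m$, a subset of size $m$ contained in a single column, and if these columns tend to infinity, then $N$ cannot be covered by finitely many generators of the first and second type, i.e. $N\in\mathcal{WR}^+$: given a putative cover by finitely many columns together with generators $G_1,\dots,G_r$ of the second type, choose a size-$m$ single-column piece $C$ with $m>r$ avoiding the finitely many columns of the cover, so that $C\subseteq G_1\cup\cdots\cup G_r$ and hence $m=|C|\le r$, a contradiction. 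I will therefore build the monotone set as a union $\bigcup_t A_t$, where $A_t$ is a monotone chunk of size $t$ sitting inside a single column $c_t$ with $c_0<c_1<\cdots$; monotonicity of each $A_t$ is automatic because every column is infinite and so, by Bolzano--Weierstrass, carries a subsequence monotone with respect to the given reals.

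The substance is to glue these chunks into a globally monotone set. I do this by recursion, keeping a finite increasing (say) set $F$ with largest index $N^*$ and largest value $A^*$, and at each step appending a size-$t$ increasing chunk drawn from a fresh column using only points of index $>N^*$ and value $>A^*$; cross-compatibility with $F$ is then free, and here the order type $\omega$ of the enumeration is essential, since discarding the initial segment of indices removes only finitely much from each column. The availability of fresh columns splits into cases by the per-column behaviour of the reals: if infinitely many columns carry values unbounded above (respectively below), one rides upward (respectively downward) and the recursion goes through verbatim, yielding an increasing (respectively decreasing) positive set.

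The main obstacle is the remaining case, where all but finitely many columns have their values confined to a bounded interval, so that no single column can keep feeding an increasing chunk past its own supremum and the columns must be combined. I plan to handle it with a secondary Erd\H{o}s--Szekeres/Ramsey analysis at the level of columns, organizing them by their value-intervals to extract infinitely many columns whose intervals are monotonically arranged, and then thinning by pigeonhole against the finite set built so far to pin down one global direction while retaining unbounded per-column multiplicity for positivity. The features that make this step possible---and that separate the monotone colouring from $\lambda$ itself, which admits no positive homogeneous set---are the transitivity of the comparison (it comes from a genuine linear order on the reals) and, once more, the order type $\omega$ of the enumeration, which bounds each point's set of predecessors and allows the construction to proceed one finite step at a time.
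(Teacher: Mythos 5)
Your reduction to $\omega\times\omega$, your observation that a second-type generator of $\mathcal{WR}$ is $\lambda$-homogeneous in colour $0$ and hence meets every column in at most one point, and the two ``unbounded'' cases are all sound. The genuine gap is the remaining case, and it is not merely that your sketch there is vague: your positivity criterion --- unboundedly large monotone chunks, each inside a single column, with the host columns tending to infinity --- is \emph{unachievable} in precisely that case, so no Erd\H{o}s--Szekeres analysis ``retaining unbounded per-column multiplicity'' can close it. Concretely, choose the bijection $h$ and the reals so that, writing $c=\mathrm{proj}_1(h(n))$, one has $x_n=1-2^{-c}+2^{-c-2}/(n+1)$. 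Then the values occurring in column $c$ fill an interval $I_c$, these intervals are pairwise disjoint with $I_0<I_1<I_2<\cdots$, and within each column the values are strictly decreasing in the index order. Consequently: (i) any nondecreasing subsequence meets each column in at most one point, so it contains no chunk of size $2$ at all; (ii) any nonincreasing subsequence meets only finitely many columns (if it has a point of index $n_1$ in column $c_1$, every later point must have smaller value, hence cannot lie in a column $c>c_1$, whose values all exceed those of $I_{c_1}$), so it is covered by finitely many vertical lines and lies in $\mathcal{WR}$. Thus in this example every $\mathcal{WR}$-positive monotone set is an increasing set with one point per column, which your criterion can never certify.

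What is missing is any engagement with the defining inequality $k>i+j$ of the second-type generators, which is exactly what decides whether a one-point-per-column set is positive: note that $\{(c,0):c\in\omega\}$ is itself a single second-type generator, so ``one point per column, columns going to infinity'' is also insufficient. The paper's tool here (Remark \ref{uwaga}) is quantitative: points $a_0,\ldots,a_k$ with strictly increasing first coordinates and $\sum a_i>\mathrm{proj}_1(a_k)$ for all $i\leq k$ cannot be covered by $k$ second-type generators. Using it, the paper handles your hard case by taking in each column a monotone subsequence with limit $\ell_c\in\mathbb{R}\cup\{\infty\}$, extracting a monotone subsequence of the sequence of limits, and then, in each of four resulting cases, choosing \emph{one} point per selected column with increasing $\pi$-index, monotone values, and coordinate sums growing so fast that every block of $k+2$ consecutive chosen points violates the covering bound for $k$ generators. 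Your two unbounded cases would fall out of this scheme as easy special cases, but the bounded case genuinely requires the sum condition (or an equivalent), and without it your proposal does not prove the theorem.
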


The second application of the ideal $\mathcal{WR}$ is connected with ideal convergence. Let $\mathcal{I}$ be an ideal on $\omega$. A sequence $(x_i)_{i\in \omega}$ of reals is \emph{$\mathcal{I}$-convergent} to $x\in \mathbb{R}$ if 
$$\left\{i\in \omega:|x_i-x|\geq\epsilon\right\}\in\mathcal{I}$$
for every $\epsilon>0$. A function $f\colon X\to\mathbb{R}$ is a \emph{pointwise $\mathcal{I}$-limit of a sequence of functions $(f_i)_{i\in \omega}$} if $(f_i(x))_{i\in \omega}$ is $\mathcal{I}$-convergent to $f(x)$ for every $x\in X$. 

For a family $\mathcal{F}$ of real-valued functions by $LIM(\mathcal{F})$ we denote the family of all functions which can be represented as a pointwise limit of a sequence of functions from $\mathcal{F}$. For instance, if $\mathcal{C}(X)$ denotes the family of continuous functions defined on a topological space $X$, then $LIM(\mathcal{C}(X))$ is the first Baire class. Similarly, by $\mathcal{I}$-$LIM(\mathcal{F})$ we denote the family of all functions which can be represented as a pointwise $\mathcal{I}$-limit of a sequence of functions from $\mathcal{F}$. Note that actually the notion of $\mathcal{I}$-convergence makes sense for all ideals on countable sets (not necessarily on $\omega$) if one considers $\{x_i:i\in\bigcup\mathcal{I}\}\subset\mathbb{R}$ instead of a sequence of reals.

Let $X$ be a topological space. A function $f\colon X\to\mathbb{R}$ is \emph{quasi-continuous} if for all $\epsilon>0$, $x_0\in X$ and open neighborhood $U\ni x_0$ there is nonempty open $V\subset U$ such that $|f(x)-f(x_0)|<\epsilon$ for all $x\in V$. By $\mathcal{QC}(X)$ we denote the family of all real-valued quasi-continuous functions defined on the space $X$. All continuous functions as well as all left-continuous (right-continuous) functions are quasi-continuous. In \cite{Grande} Grande proved that if $X$ is a metrizable Baire space then $LIM(\mathcal{QC}(X))$ is equal to the family of \emph{pointwise discontinuous functions} defined on $X$, i.e., functions with dense sets of continuity points. 

Ideal convergence has a long history, going back to Cartan's paper from the thirties (cf. \cite{Cartan}) as well as Grimeisen and Katětov papers from the sixties (cf. \cite{Grimeisen}, \cite{Kat1} and \cite{Kat2}). Later many papers were published in this area including \cite{Marciszewski}, \cite{Wilczynski} and \cite{SoleckiRank}. During the last several years, this problem appeared in numerous publications such as \cite{Balcerzak}, \cite{Debs}, \cite{Filipow}, \cite{FSz}, \cite{Kwela} and \cite{Reclaw}. Recently Natkaniec and Szuca in \cite{Natkaniec} obtained a result for the family of quasi-continuous functions. They used the game $G(\mathcal{I})$.

\begin{thm}[cf. \cite{Natkaniec}]
\label{QC}
Let $\mathcal{I}$ be a Borel ideal. TFAE:
\begin{enumerate}
\item	$\mathcal{I}$-$LIM(\mathcal{QC}(X))=LIM(\mathcal{QC}(X))$ for every metrizable Baire space $X$;
\item $\mathcal{I}$ is weakly Ramsey.
\end{enumerate}
\end{thm}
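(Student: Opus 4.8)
The plan is to reduce the statement to a purely combinatorial fact about the game $G(\mathcal{I})$ and then read off both implications from Grande's description of $LIM(\mathcal{QC}(X))$. First I would record two reductions. Since $\mathbf{Fin}\subseteq\mathcal{I}$, ordinary pointwise convergence implies $\mathcal{I}$-convergence, so $LIM(\mathcal{QC}(X))\subseteq\mathcal{I}$-$LIM(\mathcal{QC}(X))$ always holds; hence $(1)$ is equivalent to the reverse inclusion, i.e. to the assertion that on every metrizable Baire space every pointwise $\mathcal{I}$-limit of quasi-continuous functions is pointwise discontinuous (here I use Grande's theorem, which identifies $LIM(\mathcal{QC}(X))$ with the pointwise discontinuous functions). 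On the combinatorial side I would restate Laflamme's theorem in tree form: a strategy for Player I assigns to each finite sequence of admissible answers a set in $\mathcal{I}$, so the admissible answers form a tree $T\subseteq\omega^{<\omega}$ whose ramifications lie in $\mathcal{I}^*$, and Player I wins exactly when every branch of $T$ has range in $\mathcal{I}$; thus $\mathcal{I}$ is weakly Ramsey iff every such tree has an $\mathcal{I}$-positive branch, which matches the definition verbatim.

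For $(2)\Rightarrow(1)$ assume $\mathcal{I}$ is weakly Ramsey, fix a metrizable Baire space $X$ and quasi-continuous $f_i$ with $f_i\to_{\mathcal{I}}f$ pointwise, and suppose towards a contradiction that $f$ is not pointwise discontinuous, i.e. has no continuity points in some nonempty open set. A Baire-category argument there yields a nonempty open $U$ and $\epsilon>0$ with $\mathrm{osc}(f)\geq\epsilon$ at every point of $U$. I would then build a tree $T\subseteq\omega^{<\omega}$ together with points $x_s\in U$ and nested nonempty open $V_s\subseteq U$ as follows. At a node $s$ the ramification is the good-index set $\{i:|f_i(x_s)-f(x_s)|<\delta\}$ for a small fixed $\delta$; this set lies in $\mathcal{I}^*$ because $f_i(x_s)\to_{\mathcal{I}}f(x_s)$. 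For each admissible $i$, quasi-continuity of $f_i$ at $x_s$ provides a nonempty open $W\subseteq V_s$ on which $f_i$ stays within $\delta$ of $f_i(x_s)$, hence within $2\delta$ of $f(x_s)$; since $\mathrm{osc}(f)\geq\epsilon$ on $W$ there is a point of $W$ whose $f$-value is at distance at least $\epsilon/4$ from $f(x_s)$, and taking $x_{s^\frown(i)}$ to be such a point (with $V_{s^\frown(i)}\subseteq W$ a neighbourhood of it) makes $i$ a bad index at $x_{s^\frown(i)}$ once $\delta$ is small. Weak Ramseyness now supplies an $\mathcal{I}$-positive branch, along which every chosen index is bad at the point one level below; localizing this badness at a single point $z$ exhibits an $\mathcal{I}$-positive set of indices that are bad for $f$ at $z$, contradicting $f_i(z)\to_{\mathcal{I}}f(z)$. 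Thus $f$ is pointwise discontinuous and, by Grande's theorem, $f\in LIM(\mathcal{QC}(X))$.

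For $(1)\Rightarrow(2)$ I would argue contrapositively: if $\mathcal{I}$ is not weakly Ramsey, Laflamme's theorem gives a winning strategy for Player I, equivalently a tree $T\subseteq\omega^{<\omega}$ with ramifications in $\mathcal{I}^*$ all of whose branches have range in $\mathcal{I}$. From $T$ I would manufacture a concrete metrizable Baire space $X$ — e.g. the pruned body $[T]\subseteq\omega^{\omega}$, which is Polish and hence Baire — and a sequence of quasi-continuous functions $f_i$ on $X$ whose pointwise $\mathcal{I}$-limit $f$ is nowhere continuous on some open set. The indices $i$ play the role of Player II's answers: the ramifications being in $\mathcal{I}^*$ forces the relevant bad-index sets into $\mathcal{I}$, giving pointwise $\mathcal{I}$-convergence, while the non-positivity of every branch is exactly what makes the limit fail to be pointwise discontinuous, so that $f\in\mathcal{I}$-$LIM(\mathcal{QC}(X))\setminus LIM(\mathcal{QC}(X))$ by Grande's theorem. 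One could alternatively invoke Theorem \ref{MainTheorem} to reduce to the critical ideal $\mathcal{WR}$ and build the counterexample from its explicit generators, but the direct use of the strategy is cleaner.

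The main obstacle in both directions is the bridge between the combinatorics of $G(\mathcal{I})$ and the topology of quasi-continuous convergence. In $(2)\Rightarrow(1)$ the delicate step is the last one: the indices produced along the positive branch are bad at a moving sequence of points $x_{s^\frown(i)}$, and one must concentrate this badness at a single point $z$ to contradict $\mathcal{I}$-convergence; this forces one to arrange the open sets $V_s$ so that the relevant values stabilize (shrinking diameters and passing to a suitable limit point), and it is precisely quasi-continuity that lets pointwise index data be propagated to open neighbourhoods. In $(1)\Rightarrow(2)$ the difficulty is dual: one must define the $f_i$ on $[T]$ so that they are genuinely quasi-continuous and so that ordinary convergence fails while $\mathcal{I}$-convergence holds and the limit is nowhere continuous — obtaining all four properties simultaneously from the abstract strategy tree is the technical heart of the argument.
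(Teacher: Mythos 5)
A preliminary remark: this paper does not prove Theorem \ref{QC} at all --- it is Natkaniec and Szuca's theorem, imported with the citation ``cf.~\cite{Natkaniec}'' and then combined with Theorem \ref{MainTheorem} to obtain the corollary that follows it. So your attempt can only be measured against the known proof of that theorem, not against anything in this paper; measured that way, both directions have genuine gaps.

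In $(2)\Rightarrow(1)$ the gap sits exactly at the step you flag as ``delicate''. Your plan is to apply weak Ramseyness once, to the tree of good indices, and then concentrate the badness of the chosen branch $b$ at a limit point $z$ of the points $x_s$. This fails for two independent reasons. First, a metrizable Baire space need not be completely metrizable (a Bernstein set is the standard example), so the nested open sets $V_s$ along the branch can have empty intersection: no $z$ need exist, and Baireness alone will not produce one. Second, even when $z$ exists, your construction pins $f_{b(n)}(z)$ near $f(x_{b\upharpoonright n})$, so $b(n)$ is bad at $z$ only for those $n$ with $f(z)$ far from $f(x_{b\upharpoonright n})$. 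If $f$ is two-valued (the typical non--pointwise-discontinuous limit, of Dirichlet type), the branch splits into the indices with $f(x_{b\upharpoonright n})=f(z)$ and the rest; you need the second set to be $\mathcal{I}$-positive, whereas weak Ramseyness only gave you positivity of the whole branch, and the ideal may concentrate entirely on the useless half. You cannot arrange the split in advance, because $f(z)$ is unknown until the branch --- and hence $z$ --- is fixed. It is symptomatic that your argument never uses the hypothesis that $\mathcal{I}$ is Borel: the tree form of weak Ramseyness holds for arbitrary ideals, so a correct version of your argument would prove the theorem with no definability assumption, which is not what is claimed. In the known proof (Natkaniec--Szuca, following the scheme of Laczkovich--Rec\l{}aw for continuous functions, as this paper itself points out) Borelness enters through determinacy of the game $G(\mathcal{I})$: weak Ramseyness says Player I has no winning strategy, determinacy upgrades this to a winning strategy for Player II, and it is that strategy, played against adaptively and combined with a category argument inside $X$, that replaces your single positive branch.

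The direction $(1)\Rightarrow(2)$ is likewise not proved: after the correct reduction --- from a tree with ramifications in $\mathcal{I}^*$ all of whose branches lie in $\mathcal{I}$, build on some metrizable Baire space quasi-continuous $f_i$ whose $\mathcal{I}$-limit is not pointwise discontinuous --- you explicitly defer the construction, calling it ``the technical heart''. That heart is the entire content of this direction. Note that the first construction one would try on $[T]$ (indicator-type functions read off the nodes, with limit $0$) produces a continuous limit and proves nothing; one must instead engineer the $f_i$ so that the limit is a function with no continuity points on some open set (something like $\chi_{\mathbb{Q}}$) while each $f_i$ stays quasi-continuous and the bad index set at every single point is a branch of $T$ modulo a set in $\mathcal{I}$. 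Your alternative suggestion --- reduce to $\mathcal{WR}$ via Theorem \ref{MainTheorem} and transfer a counterexample along the embedding --- is sound as a reduction and matches how this paper organizes its corollary, but it still requires the base counterexample for $\mathcal{WR}$, which is again precisely the construction the proposal omits.
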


A similar result for sequences of continuous functions was obtained by Laczkovich and Recław in \cite{Reclaw}. They used a slightly different infinite game, which was also considered by Laflamme in \cite{Laflamme}. Later the same method was applied in \cite{FSz} and \cite{Kwela}.  

\begin{thm}[cf. \cite{Reclaw}]
\label{1}
Let $\mathcal{I}$ be a Borel ideal. TFAE:
\begin{enumerate}
\item	$\mathcal{I}$-$LIM(\mathcal{C}(X))=LIM(\mathcal{C}(X))$ for every uncountable Polish space $X$;
\item $\mathcal{I}$ is a weak P-ideal;
\item $\mathbf{Fin}\otimes \mathbf{Fin}\not\leq_K\mathcal{I}$;
\item $\mathbf{Fin}\otimes \mathbf{Fin}\not\sqsubseteq\mathcal{I}$.
\end{enumerate}
\end{thm}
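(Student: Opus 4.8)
The plan is to prove the four-way equivalence by splitting it into a purely combinatorial block, (2)$\Leftrightarrow$(3)$\Leftrightarrow$(4), and a descriptive block, (1)$\Leftrightarrow$(3). Recall that $\mathcal{I}$ is a \emph{weak P-ideal} if for every sequence $(A_n)_{n\in\omega}\subseteq\mathcal{I}$ there is $B\in\mathcal{I}^+$ with $A_n\cap B$ finite for all $n$, and that $LIM(\mathcal{C}(X))$ is exactly the first Baire class. Since $\mathbf{Fin}\subseteq\mathcal{I}$, ordinary pointwise convergence implies $\mathcal{I}$-convergence, so the inclusion $LIM(\mathcal{C}(X))\subseteq\mathcal{I}$-$LIM(\mathcal{C}(X))$ holds automatically; hence (1) is equivalent to the assertion that every $\mathcal{I}$-limit of continuous functions on every uncountable Polish $X$ is Baire class $1$.

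For the combinatorial block I would argue through positive sets. For (2)$\Leftrightarrow$(3): if $\phi$ witnesses $\mathbf{Fin}\otimes\mathbf{Fin}\leq_K\mathcal{I}$, put $A_n=\phi^{-1}[\{n\}\times\omega]\in\mathcal{I}$; for any $B\in\mathcal{I}^+$ one has $\phi[B]\in(\mathbf{Fin}\otimes\mathbf{Fin})^+$ (otherwise $B\subseteq\phi^{-1}[\phi[B]]\in\mathcal{I}$), so $\phi[B]$ meets some column infinitely and $A_n\cap B$ is infinite, showing $\mathcal{I}$ is not weak P. Conversely, from a witness $(A_n)$ to the failure of weak P, taken increasing so that the complement of $\bigcup_n A_n$ lies in $\mathcal{I}$ (a positive complement would have to meet some $A_n$ infinitely), map the $k$-th point of $A_n\setminus A_{n-1}$, and the complement into column $0$, to coordinates with first entry $n$; then the preimage of a set with finitely many infinite columns lies, up to a finite set, in finitely many $A_n$, hence in $\mathcal{I}$, and this $\phi$ is injective. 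For (3)$\Leftrightarrow$(4): $\sqsubseteq\Rightarrow\leq_K$ is trivial, and since $\mathbf{Fin}\otimes\mathbf{Fin}$ is dense the criterion recalled in the introduction upgrades the injective reduction just built to witness $\mathbf{Fin}\otimes\mathbf{Fin}\sqsubseteq\mathcal{I}$.

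The easy half of the descriptive block is $\neg(3)\Rightarrow\neg(1)$. By (3)$\Leftrightarrow$(4), $\neg(3)$ yields an injection $\psi\colon\bigcup\mathcal{I}\to\omega\times\omega$ with $\psi^{-1}[A]\in\mathcal{I}$ for $A\in\mathbf{Fin}\otimes\mathbf{Fin}$. On $\mathbb{R}$ take $\chi_{\mathbb{Q}}$, which is not Baire class $1$; writing $\mathbb{Q}=\{q_0,q_1,\dots\}$ and choosing continuous bumps $g_{(n,m)}$ with $g_{(n,m)}\to\chi_{\{q_0,\dots,q_n\}}$ as $m\to\infty$ and supports shrinking in $m$, a direct check shows that $(g_{(n,m)})$ is $\mathbf{Fin}\otimes\mathbf{Fin}$-convergent to $\chi_{\mathbb{Q}}$ (at each point only finitely many columns of the error set are infinite). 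Then $h_i:=g_{\psi(i)}$ are continuous and $\mathcal{I}$-converge to $\chi_{\mathbb{Q}}\notin LIM(\mathcal{C}(\mathbb{R}))$, so (1) fails.

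The reverse implication $\neg(1)\Rightarrow\neg(3)$ is the heart of the theorem. Suppose some $\mathcal{I}$-limit $f=\mathcal{I}$-$\lim_i f_i$ of continuous functions on an uncountable Polish $X$ is not Baire class $1$. By Baire's theorem there are a nonempty perfect $P\subseteq X$ and $\epsilon>0$ with $\mathrm{osc}(f\upharpoonright P)\geq\epsilon$ at every point; restrict to $P$. I would then build a Kat\'etov reduction $\mathbf{Fin}\otimes\mathbf{Fin}\leq_K\mathcal{I}$ directly, by a fusion interleaving a Cantor scheme $(U_s)_{s\in 2^{<\omega}}$ with witness points $p_{(n,m)}$ and the enumeration of $\bigcup\mathcal{I}$, arranging that the column-preimages of the reduction are the error sets $\{i:|f_i(p_{(n,m)})-f(p_{(n,m)})|\geq\epsilon/4\}\in\mathcal{I}$ while every $\mathcal{I}$-positive set of indices is spread across infinitely many columns infinitely often; by (2)$\Leftrightarrow$(3) this is exactly $\neg(3)$. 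The main obstacle is precisely this spreading property: one must prevent a positive index set from being absorbed into finitely many columns, and this is where oscillation $\geq\epsilon$ together with continuity of the $f_i$ is essential, since choosing each $U_s$ fine enough that the finitely many $f_i$ active at that stage are nearly constant on it forces the $\epsilon$-oscillation of $f$ to be realized between sibling scheme points, thereby localizing the wildness of $f$ onto the countable array. Equivalently one may run the game of \cite{Reclaw} and read the reduction off a winning strategy; either way this fusion is the technical core.
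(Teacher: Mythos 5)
First, a point of comparison: the paper does not prove Theorem \ref{1} at all --- it is quoted (``cf.\ \cite{Reclaw}'') as a known result of Laczkovich and Rec\l{}aw, the paper remarking only that the combinatorial part, $(2)\Leftrightarrow(3)\Leftrightarrow(4)$, is straightforward. So your proposal must stand on its own. Your combinatorial block is essentially right, modulo one slip in $\neg(2)\Rightarrow\neg(3)$: the preimage of $D\in\mathbf{Fin}\otimes\mathbf{Fin}$ does \emph{not} lie in finitely many $A_n$ ``up to a finite set''. The leftover $E=\bigcup_{m>N}\phi^{-1}\left[D\cap\left(\{m\}\times\omega\right)\right]$ picks finitely many points from each $A_m\setminus A_{m-1}$ and can be infinite; what saves you is that $E$ meets every $A_n$ in a finite set, so $E\in\mathcal{I}$ by the witnessing property of $(A_n)$ (any positive set meets some $A_n$ infinitely). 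With that repair, and with the complement of $\bigcup_n A_n$ merged into $A_0$ so that $\phi$ stays injective, the upgrade to $\sqsubseteq$ via density of $\mathbf{Fin}\otimes\mathbf{Fin}$ and the criterion recalled in the introduction is fine, as is your $\neg(3)\Rightarrow\neg(1)$ via the array $\mathbf{Fin}\otimes\mathbf{Fin}$-converging to $\chi_{\mathbb{Q}}$ (injectivity of $\psi$ is not even needed there).

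The genuine gap is the direction $\neg(1)\Rightarrow\neg(3)$, which you flag as ``the technical core'' but do not actually prove. The asserted spreading property does not follow from oscillation plus continuity: the error sets $A_{(n,m)}=\{i:|f_i(p_{(n,m)})-f(p_{(n,m)})|\geq\epsilon/4\}$ control $(f_i)$ only at the countably many scheme points, and a positive $B$ with $B\cap A_{(n,m)}$ finite for all $(n,m)$ says merely that $f_i\to f$ pointwise along $B$ on a countable set --- which is vacuous, since every function on a countable set is a pointwise limit of continuous functions, while the $\epsilon$-oscillation of $f$ lives at uncountably many points of $P$ that no countable array can capture. A telling symptom is that your sketch never uses the hypothesis that $\mathcal{I}$ is Borel, yet this is exactly where it is indispensable: weak P-ness is a non-adaptive property, and the Laczkovich--Rec\l{}aw proof converts it into an adaptive one --- a winning strategy for the second player in their game --- via Borel determinacy; it is the strategy, reacting to the positive set being built, not a fixed family of error sets, that defeats an arbitrary $B\in\mathcal{I}^{+}$. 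Your parenthetical ``equivalently one may run the game of \cite{Reclaw} and read the reduction off a winning strategy'' names the correct route, but extracting a strategy from $\neg(1)$ requires the determinacy argument and the translation between strategies and the combinatorics, none of which is carried out; as written, your argument would establish the equivalence for arbitrary ideals, which is more than the theorem claims and more than is true of the cited proof.
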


Recall that $\mathbf{Fin}\otimes \mathbf{Fin}$ is the ideal on $\omega\times\omega$ given by
$$A\in \mathbf{Fin}\otimes \mathbf{Fin} \Longleftrightarrow \{n\in\omega:\{m\in\omega:(n,m)\in A\}\notin \mathbf{Fin}\}\in \mathbf{Fin}$$
and $\mathcal{I}$ is a \emph{weak P-ideal} if for every $(X_i)_{i\in \omega}\subset\mathcal{I}$ there is $X\notin\mathcal{I}$ with $X\cap X_i$ finite for all $i$.

In the above $\mathbf{Fin}\otimes \mathbf{Fin}$ is critical for ideal limits of continuous functions. It was unknown if Theorem \ref{QC} has a counterpart of $\mathbf{Fin}\otimes \mathbf{Fin}$. By our characterization of weak Ramseyness we solve this problem:

\begin{cor}
Let $\mathcal{I}$ be a Borel ideal. TFAE:
\begin{enumerate}
\item	$\mathcal{I}$-$LIM(\mathcal{QC}(X))=LIM(\mathcal{QC}(X))$ for every metrizable Baire space $X$;
\item	$\mathcal{I}$ is weakly Ramsey;
\item $\mathcal{WR}\not\leq_K\mathcal{I}$;
\item $\mathcal{WR}\not\sqsubseteq\mathcal{I}$.
\end{enumerate}
\end{cor}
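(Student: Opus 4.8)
The plan is to derive the corollary by directly combining the two results already at our disposal: the Natkaniec--Szuca theorem (Theorem \ref{QC}) and the characterization of weak Ramseyness (Theorem \ref{MainTheorem}). No fresh argument about the game $G(\mathcal{I})$, about ideal convergence, or about the internal combinatorics of $\mathcal{WR}$ is required; everything needed has been established beforehand, and the corollary is a formal consequence.

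First I would note that the equivalence (1) $\Leftrightarrow$ (2) is precisely the content of Theorem \ref{QC}. The standing hypothesis of the corollary is that $\mathcal{I}$ is a Borel ideal, which matches the hypothesis of Theorem \ref{QC} verbatim; hence condition (1), that $\mathcal{I}$-$LIM(\mathcal{QC}(X))=LIM(\mathcal{QC}(X))$ for every metrizable Baire space $X$, holds exactly when $\mathcal{I}$ is weakly Ramsey.

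Next I would obtain the equivalence of (2), (3), and (4) by negating Theorem \ref{MainTheorem}. That theorem asserts, for an arbitrary ideal, the equivalence of ``$\mathcal{I}$ is not weakly Ramsey'', ``$\mathcal{WR}\sqsubseteq\mathcal{I}$'', and ``$\mathcal{WR}\leq_{K}\mathcal{I}$''. Passing to contrapositives yields the equivalence of ``$\mathcal{I}$ is weakly Ramsey'', ``$\mathcal{WR}\not\sqsubseteq\mathcal{I}$'', and ``$\mathcal{WR}\not\leq_{K}\mathcal{I}$'', which is exactly (2) $\Leftrightarrow$ (4) $\Leftrightarrow$ (3). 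Chaining this with the previous step closes the cycle among all four conditions.

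There is essentially no obstacle: the substantive work resides in Theorems \ref{QC} and \ref{MainTheorem}, and the corollary merely assembles them. The only point deserving a word of care is the compatibility of hypotheses across the two ingredients --- Theorem \ref{QC} requires $\mathcal{I}$ to be Borel, whereas Theorem \ref{MainTheorem} holds for every ideal and so imposes no additional restriction --- so that the entire chain of equivalences is valid precisely under the stated Borel assumption.
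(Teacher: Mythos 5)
Your proof is correct and matches the paper's intent exactly: the paper gives no separate argument for this corollary, treating it as the immediate combination of Theorem \ref{QC} (for the equivalence of (1) and (2)) with the contrapositive of Theorem \ref{MainTheorem} (for the equivalence of (2), (3) and (4)). Your remark about the Borel hypothesis being needed only for Theorem \ref{QC} is also accurate.
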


This paper is organized as follows. Theorem \ref{Mon} is proved in Section $2$. Section $3$ is devoted to the study of weak Ramseyness. In Section $4$ we compare weak Ramseyness with other selective properties of ideals and show that there are at least two nonisomorphic locally selective ideals which are not weakly Ramsey. Section $5$ contains the proof of Theorem \ref{MainTheorem}.

\section{The Mon property}

\begin{rem}
\label{uwaga}
If $a_0,\ldots,a_k\in\omega\times\omega$ are such points that
\begin{itemize} 
	\item $\textrm{proj}_1\left(a_i\right)<\textrm{proj}_1\left(a_j\right)$ for all $i<j\leq k$,
	\item $\sum a_i>\textrm{proj}_1\left(a_k\right)$ for all $i\leq k$,
\end{itemize}
then $a_0,\ldots,a_k$ cannot be covered by $k$ many generators of the second type of the ideal $\mathcal{WR}$. Indeed, otherwise two points out of $a_0,\ldots,a_k$ would be covered by one of those generators. Assume that $a_i$ and $a_j$, for some $i<j\leq k$, are covered by one generator of the second type of the ideal $\mathcal{WR}$. Then we should have $\textrm{proj}_1\left(a_j\right)>\sum a_i$, however $\sum a_i>\textrm{proj}_1\left(a_k\right)>\textrm{proj}_1\left(a_j\right)$. A contradiction.
\end{rem}

\begin{proof}[Proof of Theorem \ref{Mon}]
Set any bijection $\pi:\omega\times\omega\rightarrow\omega$ and sequence of reals $\left(x_n\right)_{n\in\omega}$. Consider the sequence $\left(y_{a}=x_{\pi(a)}\right)_{a\in\omega^2}$ and define
$$T=\left\{i\in\omega:\left(y_{(i,j)}\right)_{j\in\omega}\textrm{ contains a nondecreasing subsequence}\left(y_{(i,j^i_k)}\right)_{k\in\omega}\right\},$$
$$T'=\left\{i\in\omega:\left(y_{(i,j)}\right)_{j\in\omega}\textrm{ contains a nonincreasing subsequence}\left(y_{(i,j^i_k)}\right)_{k\in\omega}\right\}.$$
Since every sequence contains a monotone subsequence, one of those sets must be infinite. Suppose that $T=\left\{t_0<t_1<\ldots\right\}$ is infinite (the other case is similar). Consider now the sequence $\left(\lim_{k\rightarrow\infty}y_{(t_l,j^{t_l}_k)}\right)_{l\in\omega}\subset\mathbb{R}\cup\left\{\infty\right\}$. It contains some monotone subsequence 
$$\left(\lim_{k\rightarrow\infty}y_{(t_{l_s},j^{t_{l_s}}_k)}\right)_{s\in\omega}$$
which is either nondecreasing or decreasing. Define
$$y(s,k)=y_{(t_{l_s},j^{t_{l_s}}_k)}$$
and
$$Y=\left\{(t_{l_s},j^{t_{l_s}}_k):k,s\in\omega\right\}.$$
There are four possible cases.\\
{\bf Case 1.} If $\left(\lim_{k\rightarrow\infty}y(s,k)\right)_{s\in\omega}$ is increasing, then we construct inductively points $a_i\in Y$, $i\in\omega$, as follows. Let $a_0$ be any point in $Y\cap\left\{t_{l_0}\right\}\times\omega$. Suppose that $a_j$, for $j<i$, are constructed. Let $a_i$ be any point in $Y\cap\left\{t_{l_i}\right\}\times\omega$ such that
	\begin{itemize}
		\item $\pi(a_i)>\pi(a_{i-1})$;
		\item $y_{a_i}> y_{a_{i-1}}$;
		\item $\sum a_i>t_{l_{2i}}$.
	\end{itemize}
The three imposed in each step conditions eliminate only finitely many points from $Y\cap(\left\{t_{l_{i}}\right\}\times\omega)$ since $\lim_{k\rightarrow\infty}y(s,k)<\lim_{k\rightarrow\infty}y(s',k)$ for $s<s'$. Hence, it is always possible to choose such points.\\
Define $M=\left\{a_i:i\in\omega\right\}$. Then the sequence $(y_{a_i})_{i\in\omega}$ is increasing. Moreover, $M$ does not belong to $\mathcal{WR}$. Indeed, otherwise there would be $k$ such that $M$ would be covered by $k$ generators of the second type of the ideal $\mathcal{WR}$. However it is impossible by Remark \ref{uwaga} applied to points $a_p,\ldots,a_{p+k+1}$, where $p=1+2+\ldots +k$, since $\sum a_{p+i}>t_{l_{2p}}>t_{l_{p+k+1}}$ for $i\leq k+1$.\\
{\bf Case 2.} If the sequence $\left(\lim_{k\rightarrow\infty}y(s,k)\right)_{s\in\omega}$ is constant and for infinitely many $s$ the sequence $\left(y(s,k)\right)_{k\in\omega}$ is constant from some point on, then let $\left(s_r\right)_{r\in\omega}$ be the sequence of those $s$. We construct inductively points $a_i\in Y$, $i\in\omega$, similarly as in Case $1$, assuring that each $a_i$ is in $Y\cap\left\{t_{l_{s_i}}\right\}\times\omega$ and such that
	\begin{itemize}
		\item $\pi(a_i)>\pi(a_{i-1})$;
		\item $y_{a_i}=y_{a_{i-1}}$;
		\item $\sum a_i>t_{l_{2i}}$.
	\end{itemize}
Then $M=\left\{a_i:i\in\omega\right\}$ does not belong to $\mathcal{WR}$ by Remark \ref{uwaga} (for the same reasons as in Case $1$) and the sequence $(y_{a_i})_{i\in\omega}$ is constant.\\
{\bf Case 3.} If the sequence $\left(\lim_{k\rightarrow\infty}y(s,k)\right)_{s\in\omega}$ is constant and for infinitely many $s$ the sequence $\left(y(s,k)\right)_{k\in\omega}$ is increasing, then let $\left(s_r\right)_{r\in\omega}$ be the sequence of those $s$. We construct inductively points $a_i\in Y$, $i\in\omega$, similarly as in Case $1$, assuring that each $a_i$ is in $Y\cap\left\{t_{l_{s_i}}\right\}\times\omega$ and such that
	\begin{itemize}
		\item $\pi(a_i)>\pi(a_{i-1})$;
		\item $y_{a_i}> y_{a_{i-1}}$;
		\item $\sum a_i>t_{l_{2i}}$.
	\end{itemize}
Then $M=\left\{a_i:i\in\omega\right\}$ does not belong to $\mathcal{WR}$ by Remark \ref{uwaga} (for the same reasons as in Case $1$) and the sequence $(y_{a_i})_{i\in\omega}$ is increasing.\\
{\bf Case 4.} If the sequence$\left(\lim_{k\rightarrow\infty}y(s,k)\right)_{s\in\omega}$ is decreasing, then we construct inductively points $a_i\in Y$, $i\in\omega$, as follows. Let $a_0$ be any point in $Y\cap(\left\{t_{l_0}\right\}\times\omega)$ such that $y_{a_0}\geq\lim_{k\rightarrow\infty}y(1,k)$. It is possible to choose such point since $\lim_{k\rightarrow\infty}y(0,k)>\lim_{k\rightarrow\infty}y(1,k)$. Suppose that $a_j$, for $j<i$, are constructed and let $a_i$ be any point in $Y\cap(\left\{t_{l_{i}}\right\}\times\omega)$ satisfying
	\begin{itemize}
		\item $\pi(a_i)>\pi(a_{i-1})$;
		\item $y_{a_i}> \lim_{k\rightarrow\infty}y(i+1,k)$;
		\item $\sum a_i>t_{l_{2i}}$.
	\end{itemize}
Notice that $y(i+1,k)\in Y\cap (\{t_{l_{i+1}}\}\times\omega)$ and $\lim_{k\rightarrow\infty}y(i,k)>\lim_{k\rightarrow\infty}y(i+1,k)$, so it is possible to choose points satisfying the second condition. Again the three imposed conditions eliminate only finitely many points from $Y\cap(\left\{t_{l_{i}}\right\}\times\omega)$. Hence, it is always possible to choose such points.\\
Define $M=\left\{a_i:i\in\omega\right\}$. Then, again, $M$ is not in $\mathcal{WR}$ by Remark \ref{uwaga} (for the same reasons as in Case $1$) and the sequence $(y_{a_i})_{i\in\omega}$ is decreasing.
\end{proof}

\section{Weak Ramseyness}

Let us recall two results of Grigorieff.

\begin{prop}[cf. \cite{Grigorieff}]
\label{s.}
Let $\mathcal{I}$ be an ideal on $\omega$. TFAE:
\begin{enumerate}
\item	$\mathcal{I}$ is selective.
\item	For every tree $T\subset\omega^{<\omega}$, such that no finite intersection of its ramifications is in the ideal $\mathcal{I}$, there is an $\mathcal{I}$-positive branch.
\item	For every decreasing sequence $(X_n)_{n\in\omega}$ of $\mathcal{I}$-positive subsets of $\omega$ there exists an increasing function $f:\omega\rightarrow\omega$, with $\mathcal{I}$-positive range and such that $f(n+1)\in X_{f(n)}$ for each $n\in\omega$.
\end{enumerate}
\end{prop}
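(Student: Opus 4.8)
The plan is to prove Proposition \ref{s.} by establishing a cycle of implications (1)$\Rightarrow$(2)$\Rightarrow$(3)$\Rightarrow$(1), since each formulation of selectivity is naturally comparable to its neighbor. Before starting I would record the one structural fact that will be used repeatedly: if $T\subset\omega^{<\omega}$ is a tree whose ramifications have the property that no finite intersection of them lies in $\mathcal{I}$, then every ramification is $\mathcal{I}$-positive, and more importantly, along any finite path the ``surviving'' set of admissible next values stays $\mathcal{I}$-positive. This lets me translate freely between trees and decreasing sequences of positive sets.

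First I would prove (3)$\Rightarrow$(1), as this is the cleanest direction. Given a partition $(X_n)_{n\in\omega}$ of $\omega$ with all tails $\bigcup_{m\ge n}X_m$ positive, I set $Y_n=\bigcup_{m\ge n}X_m$; this is a decreasing sequence of $\mathcal{I}$-positive sets. Applying (3) produces an increasing $f$ with $\mathcal{I}$-positive range and $f(n+1)\in Y_{f(n)}$. The point is that $f(n+1)\ge f(n)$ forces $f(n+1)$ to lie in some $X_m$ with index at least $f(n)\ge n$, so by thinning $\{f(n):n\in\omega\}$ one extracts a selector (one point per block) that is still positive, because one can pass to a subset hitting distinct blocks while keeping positivity via the tail condition. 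I expect this to require a small combinatorial argument to guarantee the selector meets each $X_n$ at most once, but it is routine.

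Next I would do (2)$\Rightarrow$(3). Given the decreasing sequence $(X_n)_{n\in\omega}$ of positive sets, I build the tree $T$ whose nodes are strictly increasing finite sequences $(m_0,\dots,m_k)$ with $m_{i+1}\in X_{m_i}$ for each $i$. The ramification of $T$ at such a node is essentially $X_{m_k}$ (intersected with values above $m_k$), which is positive; and any finite intersection of ramifications is a finite intersection of sets of the form $X_{m}\setminus(\text{finite})$, which, by the decreasing hypothesis, equals the one with the largest index up to a finite set, hence is positive. So (2) applies and yields an $\mathcal{I}$-positive branch $b$, which is exactly the graph of the desired increasing function $f$. The main thing to check carefully is that the intersection-of-ramifications hypothesis of (2) genuinely holds, which is where the \emph{decreasing} nature of $(X_n)$ is essential.

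The hardest direction is (1)$\Rightarrow$(2), where I must extract a positive branch from a tree whose ramifications satisfy only the no-finite-intersection-in-$\mathcal{I}$ condition. Here I would attempt a diagonalization: enumerate the nodes of $T$ and, using selectivity of $\mathcal{I}$, build the branch level by level while simultaneously ensuring the resulting branch set is $\mathcal{I}$-positive. The natural route is to convert the tree into a partition of $\omega$ (or into a suitable decreasing sequence) so that a positive selector/range for that partition, guaranteed by selectivity, can be read back as a positive branch; but the subtlety is that positivity of a branch is a global condition on the union of infinitely many ramification choices, and selectivity must be invoked in a form that controls this union. I expect this step to be the main obstacle: making the reduction precise so that the selector produced by (1) genuinely follows the tree structure (each chosen value being an admissible successor of the previous one) rather than merely being positive. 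I would manage this by carefully coding ramifications as blocks of a partition indexed along a branch and appealing to the selector version of (1), checking that the no-intersection hypothesis forces the relevant tails to remain positive so selectivity is applicable.
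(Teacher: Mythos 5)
Your cycle $(1)\Rightarrow(2)\Rightarrow(3)\Rightarrow(1)$ is a reasonable plan, and your middle step $(2)\Rightarrow(3)$ is correct: the tree of strictly increasing finite sequences $(m_0,\ldots,m_k)$ with $m_{i+1}\in X_{m_i}$ has ramifications $X_{m_k}\setminus(m_k+1)$ (and $\omega$ at the root), and any finite intersection of these equals some $X_{\max}$ minus a finite set because the sequence is decreasing, hence is $\mathcal{I}$-positive since $\mathbf{Fin}\subset\mathcal{I}$; a positive branch is exactly the desired $f$. However, the other two implications have genuine gaps. In $(3)\Rightarrow(1)$ the step you call routine is in fact the crux, and as stated it fails: the range $R=f[\omega]$ can meet a single block $X_m$ in many points (every $n$ with $f(n-1)\leq m$ may contribute), and a subset of an $\mathcal{I}$-positive set need not be positive, so no choice of one point per block from $R$ is guaranteed to stay positive. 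Extracting a positive selector from a positive set partitioned into finite pieces is itself a selectivity-type demand which fails for general ideals (e.g.\ for $\mathcal{ED}$ restricted to $\{(i,j):j\leq i\}$, where every selector of the column partition is a partial function graph). The repair is to strengthen the input of $(3)$ rather than thin its output: writing $n(j)$ for the index of the block containing $j$, apply $(3)$ to $Z_k=\bigcup\left\{X_m:m>n(j)\textrm{ for every }j\leq k\right\}$; each $Z_k$ is one of your tails $Y_m$, so the sequence is decreasing and positive, and $f(k+1)\in Z_{f(k)}$ forces the block of $f(k+1)$ to differ from the block of every $f(i)$ with $i\leq k$, so that $f[\omega]$ is already a selector.

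The more serious gap is $(1)\Rightarrow(2)$: there you record the obstacle (positivity of a branch is a global condition) and an intention, but no argument, and this is the substantive direction of the proposition. Note that the paper itself gives no proof of Proposition \ref{s.} --- it is quoted from \cite{Grigorieff} --- so the relevant comparison is its proof of the analogous Proposition \ref{EquivalentConditions}, whose techniques indicate what is needed here. A workable completion: first prove $(1)\Rightarrow(3)$ by two applications of selectivity. After splitting off the trivial case where $\bigcap_nX_n$ is positive (and discarding it otherwise), put $\beta(k)=\min\{m:k\notin X_m\}$ and apply selectivity to the partition into the fibers of $\beta$, whose tails are the $X_n$; a positive selector $S$ then satisfies that $S\setminus X_n$ is finite for all $n$ (a P$^{+}$-step). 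Next apply selectivity to a partition of $S$ into finite intervals growing fast enough that any two points at least two intervals apart satisfy $r<r'\Rightarrow r'\in X_r$; splitting the resulting positive selector into the points in even-indexed and in odd-indexed intervals, one of the two parts is positive and its increasing enumeration is the required $f$ (a Q$^{+}$-step). Then prove $(3)\Rightarrow(2)$ by the reindexing trick from the paper's proof of Proposition \ref{EquivalentConditions}, implication $(3)\Rightarrow(1)$: replace the ramification at $s$ by the intersection of all ramifications at nodes $t$ with ${\rm lh}(t)\leq{\rm lh}(s)$ and $\max t\leq\max s$ --- finite intersections, hence positive by hypothesis, and monotone --- and diagonalize along the constant sequences $s_n$ to obtain a positive branch. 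Together with your $(2)\Rightarrow(3)$ and the repaired $(3)\Rightarrow(1)$ this yields all the equivalences; without something of this kind the proposal does not prove the proposition.
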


\begin{prop}[cf. \cite{Grigorieff}]
\label{w.s.}
Let $\mathcal{I}$ be an ideal on $\omega$. TFAE:
\begin{enumerate}
\item	$\mathcal{I}$ is weakly selective.
\item For every $\mathcal{I}$-positive $Y$ and every tree $T\subset Y^{<\omega}$ whose ramifications are in $\left(\mathcal{I}\upharpoonright Y\right)^*$, there is an $\mathcal{I}$-positive branch.
\item For every $\mathcal{I}$-positive $Y$ and every coloring $f:[Y]^2\rightarrow 2$, such that for each $x\in Y$ either $\left\{y\in Y:f\left(\left\{x,y\right\}\right)=0\right\}$ is in the ideal or $\left\{y\in Y:f\left(\left\{x,y\right\}\right)=1\right\}$ is in the ideal, there is $\mathcal{I}$-positive subset $H$ of $Y$ with $f\upharpoonright[H]^2$ constant.
\item For every decreasing sequence $(X_n)_{n\in\omega}$ of $\mathcal{I}$-positive subsets of $\omega$ and such that $X_n\setminus X_{n+1}\in\mathcal{I}$ for each $n$, there exists an increasing function $f:\omega\rightarrow\omega$, with $\mathcal{I}$-positive range and such that $f(n+1)\in X_{f(n)}$ for each $n\in\omega$.
\end{enumerate}
\end{prop}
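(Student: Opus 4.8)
The plan is to prove the cycle $(1)\Rightarrow(2)\Rightarrow(4)\Rightarrow(3)\Rightarrow(1)$. Three of these four implications are purely \emph{combinatorial translations} between the tree, diagonal, coloring and selector formulations, each carried out by writing down an explicit object; the genuine content sits in the single implication $(1)\Rightarrow(2)$, where one must extract a positive branch from the static partition axiom. I would first observe that in each formulation the hypotheses ``$X_n\setminus X_{n+1}\in\mathcal{I}$'', ``at most one positive block'', and ``$R_s\in(\mathcal{I}\upharpoonright Y)^*$'' all say the same thing: the relevant reservoirs are $\mathcal{I}$-positive with successive differences (or complements in $Y$) lying in $\mathcal{I}$.

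For $(2)\Rightarrow(4)$, given a decreasing $(X_n)$ with $X_n\setminus X_{n+1}\in\mathcal{I}$ I would put $Y=X_0$ and let $T$ consist of the strictly increasing tuples $(r_0,\dots,r_k)$ with $r_{i+1}\in X_{r_i}$; the ramification at $(r_0,\dots,r_k)$ is $\{n>r_k:n\in X_{r_k}\}$, whose complement in $Y$ is $X_0\setminus X_{r_k}=\bigcup_{i<r_k}(X_i\setminus X_{i+1})\in\mathcal{I}$, so it lies in $(\mathcal{I}\upharpoonright Y)^*$, and a positive branch is exactly the desired $f$. For $(4)\Rightarrow(3)$, after passing to a positive $Y$ on which the majority color of the given coloring $f$ is constant, say $1$, so that $A_x=\{y\in Y:f(\{x,y\})=1\}$ is co-small in $Y$ for every $x$, I would set $X_n=\bigcap_{x\le n,\,x\in Y}A_x$; these are positive, decreasing, with $X_n\setminus X_{n+1}\in\mathcal{I}$, and a diagonal $g$ obtained from $(4)$ has $\{g(i):i\ge 1\}$ homogeneous of color $1$. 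For $(3)\Rightarrow(1)$, given a partition admissible for weak selectiveness I would delete its unique possibly-positive block (the remainder $Y$ stays positive because every tail is positive), then color a pair inside $Y$ by $0$ or $1$ according to whether its two points share a block; the hypothesis of $(3)$ holds since each $0$-class is contained in one block, and a positive homogeneous set cannot have color $0$, hence it has color $1$, i.e.\ it is a positive selector.

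The hard part will be $(1)\Rightarrow(2)$. A single application of weak selectiveness only yields a transversal of one partition, whereas a branch must thread the ramifications coherently and, crucially, have \emph{positive range}; a transversal extracted from the layers $X_{n-1}\setminus X_n$ need not be a branch, and thinning it to respect the tree threatens positivity. I would therefore run a \emph{fusion}: build the branch together with a decreasing sequence of co-small reservoirs $W_0\supseteq W_1\supseteq\cdots$, keeping at each level only finitely many nodes of an auxiliary subtree $T'\subseteq T$ so that each $W_k$ is a \emph{finite} intersection of ramifications and hence stays co-small, with $W_k\setminus W_{k+1}\subseteq Y\setminus W_{k+1}\in\mathcal{I}$. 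Along such a subtree the branches are precisely the diagonals across $(W_k)$, and the differences $W_k\setminus W_{k+1}\in\mathcal{I}$ are exactly what makes the arising partition admissible for the \emph{weak} (rather than full) selectiveness axiom; feeding it in produces a positive diagonal, that is, a positive branch. This is also the step where the argument genuinely parallels the selective case of Proposition~\ref{s.}, the only change being the bookkeeping that certifies admissibility of the partitions, and I expect the finite-level fusion that preserves co-smallness of the reservoirs to be the main technical obstacle.
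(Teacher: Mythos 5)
Your three ``translation'' implications are essentially correct: $(2)\Rightarrow(4)$ via the tree of increasing tuples with $r_{i+1}\in X_{r_i}$, $(4)\Rightarrow(3)$ via the decreasing intersections of majority classes, and $(3)\Rightarrow(1)$ via the same-block coloring all check out, modulo routine details (e.g.\ extending the positive partial selector to a total one). The genuine gap is in $(1)\Rightarrow(2)$, and it is not a bookkeeping issue: the mechanism you describe cannot produce a branch. Weak selectiveness, applied to \emph{any} admissible partition, returns only a \emph{selector} --- a positive set meeting each block at most once. A branch, i.e.\ a diagonal through your reservoirs $W_0\supseteq W_1\supseteq\cdots$, needs the far stronger \emph{speed} condition: writing the positive set as $s_0<s_1<\cdots$, one needs $s_{i+1}\in W_{s_i}$, that is, the block index of $s_{i+1}$ must exceed the \emph{value} $s_i$. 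A selector of the layered partition $P_{k+1}=W_k\setminus W_{k+1}$ only has pairwise distinct block indices; nothing stops it from picking, in layers $P_m,P_{m'}$ with $m\neq m'$ both small, points enormously larger than $m$ and $m'$, and such a set is not a diagonal. So the sentence ``feeding it in produces a positive diagonal'' is exactly the unproved claim, and it is the entire content of the proposition --- the same obstruction (``thinning threatens positivity'') that you correctly raised against the naive approach simply reappears. There is also an internal inconsistency in the fusion as stated: a subtree whose ramifications at every level-$k$ node equal the infinite set $W_k$ cannot have finitely many nodes per level.

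For comparison: the paper itself gives no proof of this proposition (it cites \cite{Grigorieff}), and its proof of the weak-Ramsey analogue, Proposition \ref{EquivalentConditions}, never faces this difficulty, because there the partition condition $(4)$ is \emph{already} a speed condition ($f(n+1)\in\bigcup_{i>f(n)}X_i$), not a selector condition; that is why that proof is a routine cycle of translations plus the monotonization trick, while here selector-to-diagonal is the real work. Closing your gap needs new ideas along Grigorieff's lines: first reduce to a uniform situation using that weak selectiveness is inherited by restrictions $\mathcal{I}\upharpoonright A$, $A\notin\mathcal{I}$ (the first Proposition of Section 4); then introduce the first-failure function $h(y)=\min\{x<y:y\notin W_x\}$, whose infinite fibers' diagonal-intersection case is trivial; apply weak selectiveness once to the fibers of $h$ to get a positive set on which $h$ is injective and regressive, so that each $\{y:h(y)\le x\}$ is \emph{finite}; then choose finite intervals $[n_k,n_{k+1})$ with $y\ge n_{k+1}\Rightarrow h(y)\ge n_k$, apply weak selectiveness a second time to this interval partition, and split by parity of $k$ to force $h(y)>x$ for $x<y$, i.e.\ a true diagonal. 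Finally, to thread a tree rather than a single decreasing sequence, one needs the monotonization device (intersect the ramifications at \emph{all} nodes of length at most $n+1$ with entries at most $n$ to form $W_n$), which your ``finite intersections of ramifications'' gestures toward but never pins down.
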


The next Proposition \ref{EquivalentConditions} is a weak Ramseyness counterpart of Propositions \ref{s.} and \ref{w.s.}. Notice that the condition corresponding to $(1)$ of Proposition \ref{w.s.} is the last one. However it is only a restatement of the third condition. It may be quite surprising that local selectiveness, which seems to be a natural counterpart of weak selectiveness for weak Ramseyness, does not imply weak Ramseyness (for detailed arguments see Section $4$). Grigorieff's proof does not work in this case.

\begin{prop}[Essentially Grigorieff]
\label{EquivalentConditions}
Let $\mathcal{I}$ be an ideal on $\omega$. TFAE:
\begin{enumerate}
\item $\mathcal{I}$ is weakly Ramsey.
\item For every coloring $f:[\omega]^2\rightarrow 2$, such that for each $x\in \omega$ either $\left\{y\in \omega:f\left(\left\{x,y\right\}\right)=0\right\}\in\mathcal{I}$ or $\left\{y\in \omega:f\left(\left\{x,y\right\}\right)=1\right\}\in\mathcal{I}$, there is $\mathcal{I}$-positive $H$ with $f\upharpoonright[H]^2$ constant.
\item For every decreasing sequence $(X_n)_{n\in\omega}\subset\mathcal{I}^*$ of subsets of $\omega$, there exists an increasing function $f:\omega\rightarrow\omega$, with $\mathcal{I}$-positive range and such that $f(n+1)\in X_{f(n)}$ for each $n\in\omega$.
\item For every partition $(X_n)_{n\in\omega}\subset\mathcal{I}$ of $\omega$, there exists an increasing function $f:\omega\rightarrow\omega$, with $\mathcal{I}$-positive range and such that $f(n+1)\in\bigcup_{i>f(n)}X_i$ for each $n\in\omega$.
\end{enumerate}
\end{prop}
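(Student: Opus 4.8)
The plan is to prove the equivalences by establishing a cycle $(1)\Rightarrow(2)\Rightarrow(3)\Rightarrow(4)\Rightarrow(1)$, following the pattern of Grigorieff's argument for Propositions \ref{s.} and \ref{w.s.}, while being careful about which implications genuinely go through and which fail (the excerpt's remark warns that the naive counterpart breaks down somewhere). Since $\mathcal{WR}$ is defined through the coloring $\lambda$, condition $(2)$ — a Ramsey-type statement for colorings with ``one side small'' at each vertex — is the natural combinatorial heart, and I expect it to be the pivot linking the tree formulation $(1)$ to the sequence-and-function formulations $(3)$ and $(4)$.

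For $(1)\Rightarrow(2)$, given such a coloring $f:[\omega]^2\to 2$, I would build a tree $T\subset\omega^{<\omega}$ whose ramification at a node $s=(s(0),\dots,s(k))$ is the set of $n>s(k)$ that agree with every earlier coordinate on colour, i.e. $f(\{s(i),n\})$ equals the ``majority'' colour at $s(i)$ for each $i\le k$. The one-sided smallness hypothesis guarantees each ramification is co-small (in $\mathcal{I}^*$), so weak Ramseyness supplies an $\mathcal{I}$-positive branch $b$. Along $b$ each pair gets the prescribed colour, but a branch need only be internally consistent, not monochromatic, so I would then thin $b$: since $b\notin\mathcal{I}$ and each vertex has one colour small, a further pigeonhole/Ramsey step on the two-colouring restricted to $b$ yields an $\mathcal{I}$-positive homogeneous $H$. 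For $(2)\Rightarrow(3)$, from a decreasing $(X_n)\subset\mathcal{I}^*$ I would define a colouring recording, for $x<y$, whether $y\in X_x$; the smallness hypothesis holds because $X_x\in\mathcal{I}^*$ forces its complement to be in $\mathcal{I}$. A homogeneous $\mathcal{I}$-positive $H$ in the ``$y\in X_x$'' colour enumerates, in increasing order, to an increasing $f$ with $\mathcal{I}$-positive range satisfying $f(n+1)\in X_{f(n)}$.

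For $(3)\Rightarrow(4)$, given a partition $(X_n)\subset\mathcal{I}$ of $\omega$, I would set $Y_n=\bigcup_{i\ge n}X_i$; these form a decreasing sequence in $\mathcal{I}^*$ (each $Y_n$ is co-finite-union-of-ideal-sets, hence positive, and $Y_n^c=\bigcup_{i<n}X_i\in\mathcal{I}$), apply $(3)$ to get $f$ with $f(n+1)\in Y_{f(n)}=\bigcup_{i\ge f(n)}X_i$, and note $f(n+1)>f(n)$ places it in $\bigcup_{i>f(n)}X_i$ after a harmless index shift. The closing implication $(4)\Rightarrow(1)$ is where I expect the main obstacle. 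Here I would start from a tree $T\subset\omega^{<\omega}$ with all ramifications in $\mathcal{I}^*$ and must produce an $\mathcal{I}$-positive branch. The difficulty, flagged in the text's remark that ``Grigorieff's proof does not work in this case,'' is that local selectiveness alone is too weak, so the argument must exploit condition $(4)$ directly: I would assign to each $n\in\omega$ the block $X_n$ of tree-nodes that ``activate'' at level governed by $n$, engineering a partition of $\omega$ into ideal pieces so that the function $f$ from $(4)$, with $f(n+1)\in\bigcup_{i>f(n)}X_i$, traces out an infinite branch whose range is $\mathcal{I}$-positive. Making this block assignment so that the combinatorial step $f(n+1)\in\bigcup_{i>f(n)}X_i$ exactly corresponds to ``extend the current node along an admissible ramification'' — while keeping the resulting branch-set positive — is the delicate part, and I would verify positivity of the range translates back to $\mathcal{I}$-positivity of the branch via the identification of branches with their coordinate-sets.
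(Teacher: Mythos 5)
Your cycle $(1)\Rightarrow(2)\Rightarrow(3)\Rightarrow(4)\Rightarrow(1)$ matches the paper's proof in its first links: the tree whose ramifications are intersections of the ``majority colour'' sets, followed by a pigeonhole split of the $\mathcal{I}$-positive branch, is exactly the paper's argument for $(1)\Rightarrow(2)$, and the colouring recording ``$y\in X_x$'' is exactly its argument for $(2)\Rightarrow(3)$. Your $(3)\Rightarrow(4)$ (which the paper dismisses as obvious, together with the converse) works only after the index shift you allude to, i.e.\ taking $Y_n=\bigcup_{i>n}X_i$ rather than $\bigcup_{i\ge n}X_i$; note that your literal justification --- that $f(n+1)>f(n)$ places $f(n+1)$ in $\bigcup_{i>f(n)}X_i$ --- is false as stated, since nothing prevents $f(n+1)\in X_{f(n)}$ while $f(n+1)>f(n)$. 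Only the shift repairs this, and it is indeed harmless.

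The genuine gap is the closing implication $(4)\Rightarrow(1)$ (equivalently $(3)\Rightarrow(1)$), which is the mathematical heart of the proposition and which you only describe as ``the delicate part'' without giving any construction. The obstacle is structural: in $(4)$ the admissibility of $f(n+1)$ depends only on the single value $f(n)$, whereas membership of $(f(0),\dots,f(n+1))$ in a tree $T$ depends on the entire initial segment, so no advance assignment of ``blocks of tree-nodes'' $X_n$ can make a memoryless condition simulate this history-dependence. The paper resolves this with a trick that is absent from your sketch: extend the ramifications to a family $\{X_s\}_{s\in\omega^{<\omega}}\subset\mathcal{I}^*$ (setting $X_s=\omega$ for $s\notin T$), then replace each $X_t$ by the intersection of all $X_s$ with ${\rm lh}(s)\le{\rm lh}(t)$ and $\max_{k}s(k)\le\max_{k}t(k)$ (a finite intersection, hence still in $\mathcal{I}^*$), and apply condition $(3)$ to the decreasing sequence $(X_{s_n})_{n\in\omega}$, where $s_n$ is the constant sequence of value $n$ and length $n+1$. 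Because the resulting $h$ is increasing, $(h(0),\dots,h(n))$ has length $n+1\le h(n)+1$ and maximum $h(n)$, so the monotonized family gives $X_{s_{h(n)}}\subset X_{(h(0),\dots,h(n))}$ and hence $h(n+1)\in X_{(h(0),\dots,h(n))}$: the one-step condition of $(3)$ is thereby upgraded to full history control, and $h$ is an $\mathcal{I}$-positive branch. Without this (or an equivalent) idea your final step is a statement of intent, not a proof. As a side remark, the paper's warning that ``Grigorieff's proof does not work in this case'' does not refer to a difficulty inside this implication; it refers to the fact that local selectiveness, the natural analogue of condition $(1)$ of the weak-selectiveness proposition, fails to imply weak Ramseyness.
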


Formally the last conditions of both Propositions \ref{s.} and \ref{w.s.} as well as two last conditions of Proposition \ref{EquivalentConditions} make sense only for ideals on $\omega$. Therefore, the assumption that $\mathcal{I}$ is an ideal on $\omega$ is required. Although, it should be pointed out that selectivity, weak selectivity and weak Ramseyness are invariant under isomorphism of ideals. Hence, one can apply those conditions for any ideals on countable sets when considering any isomorphic copy on $\omega$ of the given ideal.

The proof is just an adaptation of Grigorieff's proofs for the case of weak selectiveness. Although we attach it here for completeness.

\begin{proof}
{\bf(3) $\Leftrightarrow$ (4):} Obvious.\\
{\bf(1) $\Rightarrow$ (2):} Assume that $\mathcal{I}$ is weakly Ramsey. Let $f:[\omega]^2\rightarrow 2$ be a coloring, such that for each $x\in \omega$ either the set $C^0_x=\left\{y\in \omega:f\left(\left\{x,y\right\}\right)=0\right\}$ is in the ideal or the set $C^1_x=\left\{y\in \omega:f\left(\left\{x,y\right\}\right)=1\right\}$ is in the ideal. Define inductively a tree $T\subset\omega^{<\omega}$ as follows:
\begin{itemize}
	\item the ramification $A_\emptyset$ of $T$ at $\emptyset$ is $\omega$.
	\item if $s=\left(s(0),\ldots,s(k)\right)\in T$, then the ramification $A_s$ of $T$ at $s$ is the intersection of $A_{\left(s(0),\ldots,s(k-1)\right)}$ with that of the sets $C^0_{s(k)}$ and $C^1_{s(k)}$ which is in $\mathcal{I}^*$.
\end{itemize}
Notice that all ramifications of $T$ are in $\mathcal{I}^*$, so there is a branch $b$ not in $\mathcal{I}$. For each $n$ there is $i(n)\in 2$ such that 
$$A_{\left(b(0),\ldots,b(n+m)\right)}\subset A_{\left(b(0),\ldots,b(n)\right)}\subset C^{i(n)}_{b(n)}$$ 
for each $m$ and therefore $f\left(\left\{b(n),b(n+m)\right\}\right)=i(n)$ for all $m$. Since $b\notin\mathcal{I}$, one of the sets $\left\{b(n):i(n)=0\right\}$ and $\left\{b(n):i(n)=1\right\}$ is not in the ideal. Hence, it is the required set.\\
{\bf(2) $\Rightarrow$ (3):} Assume that $\mathcal{I}$ satisfies condition $(2)$ and let $(X_n)_{n\in\omega}\subset\mathcal{I}^*$ be a decreasing sequence of subsets of $\omega$. Define a coloring $f:[\omega]^2\rightarrow 2$ as follows: $$f\left(\left\{n,m\right\}\right)=0\Leftrightarrow m\in X_n,$$
for $n<m$. Then $\left\{m\in \omega:f\left(\left\{n,m\right\}\right)=0\right\}\supset X_n\setminus (n+1)\in\mathcal{I}^*$ for each $n$, so there is $H\notin\mathcal{I}$ such that $f\upharpoonright[H]^2=0$. Let $h\colon\omega\to H$ be an increasing enumeration of the set $H$. Then $h[\omega]=H$ is not in the ideal and $h(n+1)\in\left\{m\in \omega:f\left(\left\{h(n),m\right\}\right)=0\right\}\subset X_{h(n)}$. Hence, $h$ is the required function.\\
{\bf(3) $\Rightarrow$ (1):} Assume that $\mathcal{I}$ satisfies condition $(3)$.\\
Firstly we will show that for every family $\left\{X_s\right\}_{s\in\omega^{<\omega}}\subset\mathcal{I}^*$, there is an increasing function $h:\omega\rightarrow\omega$, with range not in $\mathcal{I}$ and such that $h(n)\in X_{\left(h(0),\ldots,h(n-1)\right)}$ for each $n\in\omega$. Indeed, let $\left\{X_s\right\}_{s\in\omega^{<\omega}}\subset\mathcal{I}^*$ and notice that without lost of generality we can assume that this family has the following property: if $s,t\in\omega^{<\omega}$ are such that ${\rm lh}(s)\leq {\rm lh}(t)$ and $\max_{k<{\rm lh}(s)}s(k)\leq\max_{k<{\rm lh}(t)}t(k)$, then $X_t\subset X_s$. Indeed, we can let $X'_t$ be the intersection of $X_t$ with all (finitely many) $X_s$ such that ${\rm lh}(s)\leq {\rm lh}(t)$ and $\max_{k<{\rm lh}(s)}s(k)\leq\max_{k<{\rm lh}(t)}t(k)$. Then $X'_s\in\mathcal{I}^*$ and $X'_s\subset X_s$, so the desired function for the family $\left\{X'_s\right\}_{s\in\omega^{<\omega}}$ is also good for $\left\{X_s\right\}_{s\in\omega^{<\omega}}$. Let $s_n$ be the constant sequence of value $n$ and length $n+1$. Then $(X_{s_n})_{n\in\omega}$ is a decreasing family of sets in $\mathcal{I}^*$. By the assumption there is an increasing $h:\omega\rightarrow\omega$, with range not in $\mathcal{I}$ and such that $h(n+1)\in X_{s_{h(n)}}$ for each $n\in\omega$. The sequence $\left(h(0),\ldots,h(n)\right)$ is of length $n+1$ and its maximum is $h(n)$. On the other hand, the sequence $s_{h(n)}$ is of length $h(n)+1\geq n+1$ (since $h$ is increasing) and its maximum is also $h(n)$. Hence $X_{s_{h(n)}}\subset X_{\left(h(0),\ldots,h(n)\right)}$ and $h(n+1)\in X_{\left(h(0),\ldots,h(n)\right)}$.\\
Let now $T\subset\omega^{<\omega}$ be a tree with ramifications in $\mathcal{I}^*$. Define $(X_s)_{s\in\omega^{<\omega}}$ as follows: if $s\in T$ then $X_s$ is the ramification of $T$ at $s$ and otherwise $X_s=\omega$. Then there is an increasing function $h:\omega\rightarrow\omega$, with range not in $\mathcal{I}$ and such that $h(n)\in X_{\left(h(0),\ldots,h(n-1)\right)}$ for each $n\in\omega$. To finish the proof, we will show inductively that $h$ is a branch of $T$. $h(0)\in X_\emptyset$ and $\emptyset$ is in $T$, so $\left(h(0)\right)\in T$. Suppose now that $\left(h(0),\ldots,h(k-1)\right)\in T$. Then
$$h(k)\in X_{\left(h(0),\ldots,h(k-1)\right)}=\left\{n\in\omega:\left(h(0),\ldots,h(k-1),n\right)\in T\right\},$$
therefore $\left(h(0),\ldots,h(k-1),h(k)\right)\in T$.
\end{proof}

\begin{cor}
\label{implikacje}
The following implications hold:
\begin{enumerate}
\item All weakly selective ideals are weakly Ramsey.
\item All weakly Ramsey ideals are locally selective.
\end{enumerate}
\end{cor}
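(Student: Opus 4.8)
The plan is to prove the two implications separately, in each case leaning on the combinatorial reformulations already established: Proposition \ref{w.s.} for weak selectiveness and Proposition \ref{EquivalentConditions} for weak Ramseyness. Since both weak selectiveness and weak Ramseyness are invariant under isomorphism, for $(1)$ I may pass to an isomorphic copy of $\mathcal{I}$ on $\omega$ and freely use those characterizations; for $(2)$ I will instead argue straight from the tree definition of weak Ramseyness, which is stated for ideals on an arbitrary countable $X$ and needs no such reduction.

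For $(1)$, the key observation is that condition $(3)$ of Proposition \ref{EquivalentConditions} quantifies over a subclass of the sequences appearing in condition $(4)$ of Proposition \ref{w.s.}. Indeed, if $(X_n)_{n\in\omega}\subseteq\mathcal{I}^*$ is decreasing, then each $X_n$ is $\mathcal{I}$-positive (otherwise $\omega=X_n\cup X_n^c\in\mathcal{I}$), and $X_n\setminus X_{n+1}\subseteq X_{n+1}^c\in\mathcal{I}$ because $X_{n+1}\in\mathcal{I}^*$. Thus every decreasing sequence in $\mathcal{I}^*$ already satisfies the hypotheses of Proposition \ref{w.s.}$(4)$. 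Hence if $\mathcal{I}$ is weakly selective, Proposition \ref{w.s.}$(4)$ supplies, for each such sequence, an increasing $f$ with $\mathcal{I}$-positive range and $f(n+1)\in X_{f(n)}$, which is exactly condition $(3)$ of Proposition \ref{EquivalentConditions}; by that proposition $\mathcal{I}$ is weakly Ramsey.

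For $(2)$, given a partition $(X_n)_{n\in\omega}\subseteq\mathcal{I}$ of $X$, I will build a tree whose ramifications lie in $\mathcal{I}^*$ and all of whose branches are selectors, and then invoke weak Ramseyness to obtain a positive branch. Writing $c(x)$ for the unique index with $x\in X_{c(x)}$, I set the ramification at $s=(s(0),\dots,s(k))$ to be $A_s=\bigcup_{n\geq N(s)}X_n$, where $N(s)=1+\max_{i\leq k}c(s(i))$ (and $A_\emptyset=X$). Each $A_s$ is in $\mathcal{I}^*$ since its complement $\bigcup_{n<N(s)}X_n$ is a finite union of members of $\mathcal{I}$. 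Along any branch $b$ one then has $c(b(k+1))\geq N(b\upharpoonright (k+1))>c(b(i))$ for all $i\leq k$, so the block indices $c(b(0))<c(b(1))<\cdots$ are strictly increasing; consequently $b$ is injective and its range meets every block at most once, i.e.\ is a selector. A positive branch, guaranteed by weak Ramseyness, therefore yields the required $\mathcal{I}$-positive selector.

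The routine part is $(1)$, which is a one-line comparison of hypotheses. The point requiring care is $(2)$: the branch must be not merely $\mathcal{I}$-positive but an actual selector. This is exactly what forces the ramifications to discard all previously used blocks, and the reason this remains legitimate is that only finitely many blocks are discarded at each node, so the ramifications stay in $\mathcal{I}^*$. I expect this to be the main obstacle, and also the reason the alternative route through condition $(4)$ of Proposition \ref{EquivalentConditions} is less convenient: there the positive range only meets each block finitely (as the witnessing $f$ is strictly increasing and block indices exceed earlier values of $f$), but thinning that range to a genuine selector need not preserve positivity.
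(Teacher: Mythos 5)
Both halves of your argument are correct. For (1), your comparison of Proposition \ref{w.s.}(4) with Proposition \ref{EquivalentConditions}(3) is exactly the kind of immediate reading-off the paper intends for this corollary (an even quicker route: condition (2) of Proposition \ref{w.s.} with $Y=X$ \emph{is} literally the definition of weak Ramseyness). For (2), the shortest derivation available in the paper is via condition (2) of Proposition \ref{EquivalentConditions}: writing $c(x)$ for the index of the block of $x$, color $\{x,y\}$ by $1$ iff $c(x)=c(y)$; each $1$-section $X_{c(x)}\setminus\{x\}$ is in $\mathcal{I}$, a $1$-homogeneous set lies inside a single block and hence in $\mathcal{I}$, so a positive homogeneous set must be $0$-homogeneous, i.e.\ a selector (enlarge it to meet every block exactly once if your notion of selector requires that; supersets of positive sets are positive). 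Your tree construction instead inlines, for this particular coloring, the (1)$\Rightarrow$(2) step of the proposition's proof: your ramifications $A_s=\bigcup_{n\geq N(s)}X_n$ are exactly the sets that proof would generate. What your route buys is self-containedness, using only the tree definition of weak Ramseyness, at the cost of redoing work the proposition already packages. Your closing observation is also correct and worth keeping: condition (4) of Proposition \ref{EquivalentConditions} does not give local selectiveness directly, since the positive range it produces meets each block only finitely often but need not contain a positive selector; for instance, for $\mathcal{ED}$ and the partition into vertical lines, the set $\{(i,j):j\leq i\}$ is $\mathcal{ED}$-positive and meets every block finitely, yet every selector of that partition lies in $\mathcal{ED}$. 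This is precisely why the detour through trees or colorings is genuinely needed, and it foreshadows the paper's later point that local selectiveness does not imply weak Ramseyness.
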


\begin{cor}
\label{w.R.}
The ideal $\mathcal{WR}$ is not weakly Ramsey. 
\end{cor}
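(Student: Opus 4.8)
The plan is to use the equivalence (1) $\Leftrightarrow$ (2) of Proposition \ref{EquivalentConditions} and to show that $\mathcal{WR}$ fails condition (2). Since condition (2) and weak Ramseyness make sense for ideals on any countable set and are invariant under isomorphism, I would work directly with $\mathcal{WR}$ on $\omega\times\omega$. The natural witness is the very coloring $\lambda\colon[\omega\times\omega]^2\to 2$ used to define $\mathcal{WR}$: I aim to check that $\lambda$ satisfies the hypothesis of (2) but admits no $\mathcal{WR}$-positive homogeneous set.

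First I would verify that for each point $x=(i,j)$ at least one of the sets $\{y:\lambda(\{x,y\})=0\}$, $\{y:\lambda(\{x,y\})=1\}$ belongs to $\mathcal{WR}$; in fact the color-$1$ class always does. Fix $x=(i,j)$ and take any $y=(k,l)$ with $\lambda(\{x,y\})=1$. If $y$ lies lexicographically above $x$, then by the definition of $\lambda$ we have $k\leq i+j$. If $y$ lies lexicographically below $x$, then $k\leq i\leq i+j$ regardless of the value of $\lambda$. Hence every $y$ in the color-$1$ class has first coordinate at most $i+j$, so this class is contained in the finitely many vertical lines $\{0,\ldots,i+j\}\times\omega$ and therefore belongs to $\mathcal{WR}$.

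It then remains to show that there is no $\mathcal{WR}$-positive $H$ with $\lambda\upharpoonright[H]^2$ constant, i.e.\ no $\mathcal{WR}$-positive $\lambda$-homogeneous set. This should follow directly from the definition of $\mathcal{WR}$: a set homogeneous in color $0$ is precisely a generator of the second type, while a set homogeneous in color $1$ is, by the same computation as above applied to its lexicographically least point (the lexicographic order on $\omega\times\omega$ is a well-order, so such a point exists), contained in finitely many vertical lines and hence covered by generators of the first type. In either case a homogeneous set belongs to $\mathcal{WR}$, so none can be $\mathcal{WR}$-positive. Thus $\lambda$ violates condition (2), and by Proposition \ref{EquivalentConditions} the ideal $\mathcal{WR}$ is not weakly Ramsey.

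The only genuinely computational point, and the step I would be most careful about, is the bookkeeping with the lexicographic convention in the definition of $\lambda$, since the color of a pair $\{x,y\}$ depends on which of $x,y$ is the lexicographically larger point. Once this case split is handled, both claims reduce to the trivial observation that the larger point of a color-$1$ pair has first coordinate bounded by the sum of the coordinates of the smaller one.
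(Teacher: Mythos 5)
Your proposal is correct and follows the paper's own argument: both invoke Proposition \ref{EquivalentConditions}(2) with the defining coloring $\lambda$, observe that each color-$1$ class is covered by finitely many vertical lines, and note that $\lambda$-homogeneous sets belong to $\mathcal{WR}$ (which the paper simply cites from Definition 1.1, while you re-verify it via the lexicographic case split). The extra bookkeeping you supply is exactly the detail the paper leaves implicit, so the two proofs are essentially identical.
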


\begin{proof}
$\mathcal{WR}$ is generated by homogeneous subsets of $\lambda\colon \left[\omega\times\omega\right]^2\to 2$ given by:
$$\lambda\left(\left(i,j\right),\left(k,l\right)\right)=\left\{\begin{array}{ll}
0 & \mbox{\boldmath{if }} k>i+j\\
1 & \mbox{\boldmath{if }} k\leq i+j\\
\end{array}\right.$$
for all $\left(i,j\right)$ below $\left(k,l\right)$ in the lexicographical order. It suffices to observe that
$$\left\{b\in \omega\times\omega:\lambda\left(\left\{a,b\right\}\right)=1\right\}\in\mathcal{WR},$$
for each $a\in\omega\times\omega$, since this set is covered by finitely many vertical lines.
\end{proof}

\section{Comparison of weak Ramseyness with weak selectiveness and local selectiveness}

Both implications in Corollary \ref{implikacje} cannot be reversed. In the case of the second one it may be quite surprising, because a natural counterpart of first condition of Grigorieff's Proposition \ref{w.s.} for weak Ramseyness would be local selectiveness.

The following discussion will lead us to a conclusion that there are weakly Ramsey ideals which are not weakly selective.

\begin{prop}
Let $\mathcal{I}$ be an ideal on $X$ and $A\notin\mathcal{I}$.
\begin{enumerate}
\item If the ideal $\mathcal{I}\upharpoonright A$ is weakly Ramsey, then $\mathcal{I}$ is weakly Ramsey.
\item If $\mathcal{I}$ is weakly selective, then the ideal $\mathcal{I}\upharpoonright A$ is weakly selective.
\end{enumerate}
\end{prop}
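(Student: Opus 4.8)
The plan is to handle the two parts separately, in each case transporting the relevant combinatorial object between $X$ and $A$ and exploiting the basic fact that for $D\subseteq A$ one has $D\in\mathcal{I}\upharpoonright A$ if and only if $D\in\mathcal{I}$ (since ideals are closed under subsets). In particular, $(\mathcal{I}\upharpoonright A)$-positivity of a subset of $A$ coincides with $\mathcal{I}$-positivity, and for $B\subseteq A$ we have $B\in(\mathcal{I}\upharpoonright A)^*$ if and only if $A\setminus B\in\mathcal{I}$. The hypothesis $A\notin\mathcal{I}$ guarantees $\mathcal{I}\upharpoonright A$ is a proper ideal, so that these notions make sense.

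For part (1), I would start with a tree $T\subseteq X^{<\omega}$ all of whose ramifications lie in $\mathcal{I}^*$ and restrict it to $A$ by setting $T'=T\cap A^{<\omega}$. This is again a tree, and for $s\in T'$ its ramification inside $T'$ is exactly $R_s\cap A$, where $R_s=\{n\in X:s^\frown(n)\in T\}\in\mathcal{I}^*$. Since $A\setminus(R_s\cap A)\subseteq X\setminus R_s\in\mathcal{I}$, each such ramification lies in $(\mathcal{I}\upharpoonright A)^*$. Weak Ramseyness of $\mathcal{I}\upharpoonright A$ then yields a $(\mathcal{I}\upharpoonright A)$-positive branch $b$ of $T'$; as $T'\subseteq T$, this $b$ is a branch of $T$, and since its range is a subset of $A$ not in $\mathcal{I}\upharpoonright A$, it is not in $\mathcal{I}$ either. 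Thus $b$ is the desired $\mathcal{I}$-positive branch. I expect this direction to be essentially bookkeeping.

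For part (2), I would take a partition $(A_n)_{n\in\omega}$ of $A$ witnessing the hypothesis for $\mathcal{I}\upharpoonright A$ (at most one piece $\mathcal{I}$-positive, and every tail $\bigcup_{m\geq n}A_m$ $\mathcal{I}$-positive) and extend it to a partition of $X$ by absorbing $X\setminus A$ into a single, carefully chosen piece: let $n_0$ index the unique positive piece if one exists and $n_0=0$ otherwise, and put $B_{n_0}=A_{n_0}\cup(X\setminus A)$ and $B_n=A_n$ for $n\neq n_0$. One checks that $(B_n)$ is a partition of $X$ with at most one positive piece and with $\bigcup_{m\geq n}B_m\supseteq\bigcup_{m\geq n}A_m\notin\mathcal{I}$, so weak selectiveness of $\mathcal{I}$ provides an $\mathcal{I}$-positive selector $S$. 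I would then take $S'=S\cap A$, which is a selector for $(A_n)$ because $S\cap A_n\subseteq S\cap B_n$ for every $n$.

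The one genuinely delicate point, and the step I would treat most carefully, is the positivity of $S'$. Here the construction pays off: since $X\setminus A$ is contained in the single block $B_{n_0}$, the selector $S$ meets $X\setminus A$ in at most one point, so $S\cap(X\setminus A)$ is finite and hence in $\mathcal{I}$; as $S=(S\cap A)\cup(S\cap(X\setminus A))\notin\mathcal{I}$, closure under finite unions forces $S\cap A=S'\notin\mathcal{I}$, i.e.\ $S'$ is $(\mathcal{I}\upharpoonright A)$-positive. The main obstacle is thus ensuring that the selector cannot \emph{escape} into $X\setminus A$, which is precisely what placing all of $X\setminus A$ into a single block of the extended partition guarantees.
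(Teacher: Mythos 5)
Your proof is correct and takes essentially the same approach as the paper: in part (1) you restrict the tree to $A^{<\omega}$ and observe that its ramifications $R_s\cap A$ lie in $(\mathcal{I}\upharpoonright A)^*$, and in part (2) you extend the partition of $A$ to $X$ by absorbing $X\setminus A$ into the single (possibly positive) block and then intersect the resulting selector with $A$. Your explicit verification that $S\cap A$ stays $\mathcal{I}$-positive (because $S$ meets $X\setminus A$ in at most one point and $\mathbf{Fin}\subseteq\mathcal{I}$) merely fills in a detail the paper states without proof.
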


\begin{proof}
{\bf(1):} Assume first that $\mathcal{I}\upharpoonright A$ is weakly Ramsey. We will show that $\mathcal{I}$ is weakly Ramsey. Let $T\subset X^{<\omega}$ be a tree with all ramifications $X_s$ in $\mathcal{I}^*$. Define a tree $T'\subset A^{<\omega}$ in the following way: if $s\in T'$, then the ramification $B_s$ of $T'$ at $s$ is $X_s\cap A$. Observe that $B_s\in(\mathcal{I}\upharpoonright A)^*$ for all $s\in T'$, so there is a $\mathcal{I}\upharpoonright A$-positive branch $b$ of $T'$. To conclude the proof, notice that $b$ is a branch of $T$, which is not in $\mathcal{I}$.\\
{\bf(2):} Assume now that $\mathcal{I}$ is weakly selective. We will show that $\mathcal{I}\upharpoonright A$ is weakly selective. Let $(X_n)_{n\in\omega}$ be a partition of the set $A$ such that $X_i\in\mathcal{I}\upharpoonright A$ for $i>0$. We define a partition of the set $X$ in the following way: $Y_0=(X\setminus A)\cup X_0$ and $Y_n=X_n$ for $n>0$. Then $Y_n\in\mathcal{I}$ for all $n>0$. Moreover, $\bigcup_{m\geq n}Y_m\notin\mathcal{I}$ for all $n\in\omega$. Hence, there is a $\mathcal{I}$-positive selector $S$ of the partition $(Y_n)_{n\in\omega}$. Observe that $S\cap A$ is a $(\mathcal{I}\upharpoonright A)$-positive selector of the partition $(X_n)_{n\in\omega}$.
\end{proof}

By $X\oplus Y$ we denote the disjoint union of sets $X$ and $Y$, i.e., $X\oplus Y=(\left\{0\right\}\times X)\cup(\left\{1\right\}\times Y)$. If $\mathcal{I}$ and $\mathcal{J}$ are ideals on $X$ and $Y$, respectively, then $\mathcal{I}\oplus\mathcal{J}$ given by 
$$A\in\mathcal{I}\oplus\mathcal{J}\Leftrightarrow\left\{x\in X: (0,x)\in A\right\}\in\mathcal{I}\wedge\left\{y\in Y: (1,y)\in A\right\}\in\mathcal{J}$$
is an ideal on $X\oplus Y$.

The existence of a weakly Ramsey ideal which is not weakly selective follows from the above Proposition. For instance, let $\emptyset\otimes \mathbf{Fin}$ be the ideal on $\omega\times\omega$ consisting of those sets which are finite on every vertical line. Observe that $\emptyset\otimes \mathbf{Fin}\upharpoonright\left\{0\right\}\times\omega=\mathbf{Fin}$, hence 
$$\emptyset\otimes \mathbf{Fin}=\left(\emptyset\otimes \mathbf{Fin}\upharpoonright\left\{0\right\}\times\omega\right)\oplus\left(\emptyset\otimes \mathbf{Fin}\upharpoonright\left(\omega\setminus\left\{0\right\}\right)\times\omega\right)$$
is weakly Ramsey. Then $\left(\emptyset\otimes \mathbf{Fin}\right)\oplus\mathcal{ED}$ is as needed, since $\mathcal{ED}$ is not locally selective (and therefore cannot be weakly selective).

In the next example we construct a locally selective ideal which is not weakly Ramsey. Later in this paper we will show that the ideal $\mathcal{WR}$ is another example of such an ideal (cf. Corollary \ref{WR}).

\begin{exa}
\label{example}
Let $\mathcal{ED}_\uparrow$ be the ideal on $\omega\times\omega$ generated by vertical lines (called \emph{generators of the first type}) and graphs of nondecreasing functions from $\omega$ to $\omega$ (called \emph{generators of the second type}). In other words $\mathcal{ED}_\uparrow$ is generated by homogeneous subsets of the coloring $\chi_\uparrow\colon \left[\omega\times\omega\right]^2\to 2$ given by:
$$\chi_\uparrow\left(\left\{\left(i,j\right),\left(k,l\right)\right\}\right)=\left\{\begin{array}{ll}
0 & \mbox{\boldmath{if }} i<k \mbox{\boldmath{ and }} j\leq l\\
1 & \mbox{\boldmath{if not}}\\
\end{array}\right.$$
for all $\left(i,j\right)$ below $\left(k,l\right)$ in the lexicographical order.\\
To show that $\mathcal{ED}_\uparrow$ is not weakly Ramsey, it suffices to observe that
$$\left\{b\in \omega\times\omega:\chi_\uparrow\left(\left\{a,b\right\}\right)=1\right\}\in\mathcal{ED}_\uparrow$$
for each $a\in\omega\times\omega$, since this set is covered by finitely many vertical lines and graphs of constant (so nondecreasing) functions. Hence, $\mathcal{ED}_\uparrow$ does not satisfy condition $(2)$ of Proposition \ref{EquivalentConditions}.\\
On the other hand, $\mathcal{ED}_\uparrow$ is locally selective. Indeed, let $(X_n)_{n\in\omega}\subset\mathcal{ED}_\uparrow$ be a partition of $\omega\times\omega$. We will find a selector $S$ of that partition not belonging to $\mathcal{ED}_\uparrow$. Observe that each $X_n$ is infinite only on finitely many vertical lines. There are two possible cases:\\
{\bf Case 1.} There are infinitely many vertical lines with infinite intersection with some $X_n$. In this case, there is an infinite set $T\subset\omega$ such that for each its element $t$ there is $k(t)\in\omega$ with $X_{k(t)}\cap\left(\left\{t\right\}\times\omega\right)$ infinite. By shrinking the set $T$ we can additionally assume that $k(t)\neq k(t')$ for $t,t'\in T$, $t\neq t'$. Enumerate $T=\left\{t_0,t_1,\ldots\right\}$ in such a way that $(t_n)_{n\in\omega}$ is a concatenation of finite decreasing sequences of increasingly larger lengths and such that all elements of the next finite decreasing sequence are greater than all elements of each previous finite decreasing sequence, i.e., for instance
$$t_0<t_2<t_1<t_5<t_4<t_3<t_9<t_8<t_7<t_6<t_{10}<\ldots$$
Inductively pick an increasing sequence $\left(m_i\right)_{i\in\omega}\subset\omega$. At the end $S$ will consist of all $\left(t_i,m_i\right)$. Let $m_0$ be any point with $\left(t_0,m_0\right)\in X_{k(t_0)}$. Suppose that $m_j$, for $j<i$, are constructed. Let $m_i$ be any point with $\left(t_i,m_i\right)\in X_{k(t_i)}$ such that $m_i>m_{i-1}$. Define $S=\left\{\left(t_i,m_i\right):i\in\omega\right\}$. Then $S$ is contained in some selector of the partition $(X_n)_{n\in\omega}$. On the other hand, suppose that $S$ is in $\mathcal{ED}_\uparrow$. Then there is $k$ such that $S$ is covered by $k$ many generators of the second type of the ideal $\mathcal{ED}_\uparrow$, i.e., $k$ many graphs of nondecreasing functions. Then at least two points of $\left(t_p,m_p\right),\ldots,\left(t_{p+k+1},m_{p+k+1}\right)$ are in one of them, where $p=1+2+\ldots +k$. However, it is impossible, since $t_{p+j}<t_{p+i}$ and $m_{p+j}>m_{p+i}$ for $i<j\leq k+1$. Hence, $S$ is not in $\mathcal{ED}_\uparrow$.\\
{\bf Case 2.} There are infinitely many vertical lines intersecting infinitely many $X_n$'s. In this case, there is an infinite set $T\subset\omega$ such that for each $t\in T$ the set 
$$\left\{i:X_i\cap\left(\left\{t\right\}\times\omega\right)\neq\emptyset\right\}$$
is infinite. Enumerate $T=\left\{t_0,t_1,\ldots\right\}$ in the same way as in Case $1$ and inductively pick an increasing sequence $\left(m_i\right)_{i\in\omega}$. Let $m_0$ be any point. Suppose that $m_j$, for $j<i$, are constructed. Let $m_i$ be any point with 
$$\left(t_i,m_i\right)\notin\bigcup\left\{X_k:\exists_{j<i} \left(t_j,m_j\right)\in X_k\right\}$$
and such that $m_i>m_{i-1}$. Then the set $S=\left\{\left(t_i,m_i\right):i\in\omega\right\}$ is a subset of some selector of the partition $(X_n)_{n\in\omega}$ and does not belong to $\mathcal{ED}_\uparrow$ for the same reasons as in Case 1.\\
Hence, $\mathcal{ED}_\uparrow$ is locally selective.
\end{exa}

The ideal $\mathcal{ED}_\uparrow$ from Example \ref{example} is $\bf{\Sigma^{0}_{2}}$. Indeed, the submeasure $\phi$ on $\omega\times\omega$ defined by
$$\begin{array}{rcl}
\phi(A) & = & \inf\left\{\left|\mathcal{C}\right|:A\subset\bigcup\mathcal{C}\textrm{ and each }C\in\mathcal{C}\textrm{ is either a generator}\right.\\ 
& & \left.\textrm{of the first or of the second type of the ideal }\mathcal{ED}_\uparrow\right\}\end{array}$$
is lower semicontinuous and $\mathcal{ED}_\uparrow=\mathbf{Fin}(\phi)$.

\begin{rem}
There is an ideal on $\omega$ isomorphic to $\mathcal{ED}_\uparrow$ which is not Mon.
\end{rem}

\begin{proof}
Denote $U=\left\{\left(i,j\right):j\geq i\right\}$ and $L=\left\{\left(i,j\right):j< i\right\}$. Define inductively a bijection $\pi\colon\omega\to\omega\times\omega$. In the $n$-th inductive step we define $\pi$ on numbers from $2(1+2+\ldots +n)$ to $2(1+\ldots+(n+1))-1$. Let $\pi(0)=(0,0)$ and $\pi(1)=(1,0)$. Suppose that $\pi(i)$ for $i<p=2(1+2+\ldots +n)$ are defined and let $\pi(p+2k)=(k,n)$ and $\pi(p+2k+1)=(n+1,n-k)$ for $k=0,\ldots,n$. Notice that $\pi[\{2k:k\in\omega\}]=U$ and $\pi[\{2k+1:k\in\omega\}]=L$.\\
Consider a sequence $\left(x_n\right)_{n\in\omega}$ such that for each $k\in\omega$ both $\left(x_n\right)\upharpoonright\pi[U\cap(\{k\}\times\omega)]$ and $\left(x_n\right)\upharpoonright\pi[L\cap(\omega\times\{k\})]$ are decreasing sequences valued in the open interval $(k,k+1)\subset\mathbb{R}$. Suppose that $M\subset\omega$ is such that the sequence $\left(x_n\right)\upharpoonright M$ is monotone. If it is nonincreasing, then $\pi[M]\cap U$ is covered by finitely many vertical lines and $\pi[M]\cap L$ is covered by finitely many horizontal lines (which are graphs of constant, so nondecreasing functions). Indeed, let $r\in\mathbb{R}$ be the smallest element of the sequence $\left(x_n\right)\upharpoonright M$. There is such $k$ that $r\in (k-1,k)$. Then $\pi[M]\cap U\subset k\times\omega$ and $\pi[M]\cap L\subset\omega\times k$. Hence, $\pi[M]$ is in $\mathcal{ED}_\uparrow$. On the other hand, if $\left(x_n\right)\upharpoonright M$ is nondecreasing, then both $\pi[M]\cap U$ and $\pi[M]\cap L$ are graphs of nondecreasing functions from $\omega$ to $\omega$. Indeed, let $n,m\in M\cap\pi^{-1}[U]$ be such that $n<m$ and $x_n\leq x_m$. Assume that $n=2(1+\ldots+(j+1))+2i$ and $m=2(1+\ldots+(l+1))+2k$ for some $i,j,k,l$ such that $i\leq j$ and $k\leq l$. Since $x_n\leq x_m$ then by the definition of the sequence $\left(x_n\right)_{n\in\omega}$ we have that $i<k$. We must show that $j\leq l$. But if $j>l$ then $n>m$. Therefore the set $\pi[M]\cap U$ is covered by a graph of a nondecreasing function. In the case of the set $\pi[M]\cap L$ the proof is similar. Hence, also in this case, $\pi[M]$ is in $\mathcal{ED}_\uparrow$.
\end{proof}

Next Proposition follows from Theorem \ref{MainTheorem}, however here we attach a direct proof.

\begin{prop}
\label{zawieranie}
Ideal $\mathcal{ED}_\uparrow$ contains an isomorphic copy of the ideal $\mathcal{WR}$ (so $\mathcal{WR}\sqsubseteq\mathcal{ED}_\uparrow$).
\end{prop}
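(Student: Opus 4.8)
The plan is to build the embedding by hand, guided by a forced monotonicity of the second-type generators of $\mathcal{WR}$. First I would reduce the problem: by Lemma \ref{dense} the ideal $\mathcal{WR}$ is dense, so by the density reformulation of $\sqsubseteq$ recalled in the introduction it is enough to produce a one-to-one map $f\colon\omega\times\omega\to\omega\times\omega$ (from the $\mathcal{ED}_\uparrow$-copy to the $\mathcal{WR}$-copy) with $f^{-1}[A]\in\mathcal{ED}_\uparrow$ for every $A\in\mathcal{WR}$. Since $\mathcal{ED}_\uparrow$ is closed under subsets and finite unions and $\mathcal{WR}$ is generated by its vertical lines together with its second-type generators, it suffices to verify the condition on generators, i.e.\ that $f^{-1}[V]$ and $f^{-1}[G]$ lie in $\mathcal{ED}_\uparrow$ for every vertical line $V$ and every second-type generator $G$. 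I will in fact aim for a bijection (yielding the full conclusion), which is cleanest to describe through its inverse $g$ sending the $\mathcal{WR}$-copy to the $\mathcal{ED}_\uparrow$-copy; the requirement then reads $g[V],g[G]\in\mathcal{ED}_\uparrow$.

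The key observation, an analogue of Remark \ref{uwaga}, concerns sums along second-type generators. Enumerate the points of such a $G$ by increasing first coordinate, $(c_0,d_0),(c_1,d_1),\dots$ with $c_0<c_1<\cdots$ (a second-type generator meets each column at most once, since two points sharing a first coordinate violate the defining inequality). Applying that inequality to $(c_k,d_k)$ and $(c_{k+1},d_{k+1})$ forces $c_{k+1}>c_k+d_k$, so the coordinate sums $s_k=c_k+d_k$ satisfy $s_k<c_{k+1}\le s_{k+1}$; hence both $(c_k)$ and $(s_k)$ are strictly increasing and they interleave, $c_k\le s_k<c_{k+1}$. Thus every second-type generator is a transversal meeting the finite sum-diagonals $\{(c,d):c+d=s\}$ in increasing order of $s$, with increasing first coordinates as well.

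This makes the assignment $(c,d)\mapsto(c+d,c)$ very natural: it sends $G$ to $\{(s_k,c_k):k\in\omega\}$, a set whose first and second coordinates both strictly increase, i.e.\ the graph of a strictly increasing function and so a generator of $\mathcal{ED}_\uparrow$; and it sends a vertical line $\{m\}\times\omega$ to $\{(m+d,m):d\in\omega\}$, the graph of the constant function $m$, again a generator of $\mathcal{ED}_\uparrow$. Both families of generators are therefore carried into $\mathcal{ED}_\uparrow$, which is exactly the pullback condition isolated above.

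The main obstacle is that this map, while one-to-one, has range only the half-plane $\{(a,b):b\le a\}$, so it is neither onto nor in the direction that $\sqsubseteq$ demands, and a short analysis shows no purely coordinatewise formula can be patched to fix this: along a generator the gaps $c_{k+1}-c_k$ (hence the values $d_k$) are unconstrained, so the sum $s_k$ is the only quantity forced to be monotone, yet $s$ cannot be a single coordinate of an onto map because the sum-diagonals are finite. The fix I would pursue keeps the sum as the organizing parameter but spreads each finite diagonal $\{c+d=s\}$ across $\omega\times\omega$ by a recursively chosen family of bijections, arranged so that the interleaving $c_k\le s_k<c_{k+1}$ becomes the statement that the images of any second-type generator contain no long strictly decreasing staircase, that is, only bounded antichains in the coordinatewise order. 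By Dilworth's theorem such a set is a finite union of graphs of nondecreasing functions and hence lies in $\mathcal{ED}_\uparrow$, while a parallel bookkeeping keeps the images of vertical lines in $\mathcal{ED}_\uparrow$. I expect the production of a single such assignment taming all second-type generators and all vertical lines simultaneously to be the crux of the argument, with the sum-monotonicity observation as the mechanism that makes it feasible.
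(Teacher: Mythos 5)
Your setup and your key observation are both sound: a second-type generator of $\mathcal{WR}$ meets each vertical line at most once, its first coordinates and coordinate sums strictly increase and interleave (this is precisely the mechanism behind the paper's Remark \ref{uwaga}), and your map $(c,d)\mapsto(c+d,c)$ does carry vertical lines and second-type generators into generators of $\mathcal{ED}_\uparrow$. But the proof stops exactly where the work begins. What you need is either a bijection $g$ from the $\mathcal{WR}$-copy onto the $\mathcal{ED}_\uparrow$-copy with $g[V],g[G]\in\mathcal{ED}_\uparrow$, or, via density and Lemma \ref{dense}, a one-to-one map defined on \emph{all} of the $\mathcal{ED}_\uparrow$-copy pulling every $\mathcal{WR}$-set back into $\mathcal{ED}_\uparrow$; your map is neither, and it cannot be cheaply repaired, since its inverse is undefined on the $\mathcal{ED}_\uparrow$-positive set $\{(a,b):b>a\}$ while already exhausting the whole $\mathcal{WR}$-copy as its range. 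Your proposed fix --- spreading the finite diagonals by ``a recursively chosen family of bijections'' so that images of generators have bounded antichains, then invoking Dilworth --- is a statement of intent, not an argument; as you say yourself, producing such an assignment is the crux, and it is exactly what is missing.

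Moreover, the negative claim motivating that detour (``no purely coordinatewise formula can be patched to fix this'') is false, and seeing why would have completed your proof along the lines of the paper. The paper splits each $\mathcal{WR}$-column by parity and uses two coordinatewise formulas: $\pi\left((i,2j)\right)=(i,i+j)$, landing bijectively on $\{(a,b):b\geq a\}$, and $\pi\left((i,2j+1)\right)=(i+j+1,i)$, landing bijectively on $\{(a,b):a>b\}$; together they give a bijection of $\omega\times\omega$. The point you missed is that the image coordinate need not be the exact sum: a compressed value such as $i+j\leq i+2j$ still inherits the monotonicity, because for generator points $(i,2j),(k,2l)$ with $k>i+2j$ one gets $k>i$ and $k+l\geq k>i+j$, and similarly in the odd case. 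Hence the image of a second-type generator splits into two sets on which both coordinates strictly increase (so it is covered by two graphs of nondecreasing functions), and the image of a vertical line is covered by one vertical line and one constant graph. This freedom to compress is what lets two coordinatewise formulas mesh into a single bijection and removes the obstacle you took to be fatal.
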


\begin{proof}
We define a function $\pi\colon\omega\times\omega\to\omega\times\omega$ by
$\pi\left((i,2j)\right)=(i,i+j)$ and $\pi\left((i,2j+1)\right)=(i+j+1,i)$ for $i,j\in\omega$. Then $\pi[\bigcup_{n\in\omega}\omega\times\{2n\}]\subset\{(i,j):i\leq j\}$ and $\pi[\bigcup_{n\in\omega}\omega\times\{2n+1\}]\subset\{(i,j):i>j\}$. Observe that $\pi$ is $1-1$. We will show that $\pi[\omega\times\omega]=\omega\times\omega$ (so $\pi$ is onto). Let $(n,m)\in\omega\times\omega$. If $n\leq m$, then $(n,m)=\pi((n,2(m-n)))$ and if $n>m$, then $(n,m)=\pi((m,2(n-m-1)+1))$. \\
To conclude the proof we will show that for each generator $A$ of the ideal $\mathcal{WR}$ the set $\pi[A]$ belongs to $\mathcal{ED}_\uparrow$. Assume first that $A$ is a generator of the first type of the ideal $\mathcal{WR}$, i.e., $A=\{i\}\times\omega$ for some $i$. Then $\pi[A]$ is covered by the set $\{i\}\times\omega$ and by the graph of a constant (so nondecreasing) function equal to $i$. Therefore $\pi[A]$ belongs to $\mathcal{ED}_\uparrow$. Assume now that $A$ is a generator of the second type of the ideal $\mathcal{WR}$. Observe that
$$\pi[A]=\pi[A\cap\bigcup_{n\in\omega}\omega\times\{2n\}]\cup\pi[A\cap\bigcup_{n\in\omega}\omega\times\{2n+1\}].$$
We will show that both sets in the above sum are covered by graphs of nondecreasing functions. Let $(i,2j),(k,2l)$ belonging to $A\cap\bigcup_{n\in\omega}\omega\times\{2n\}$ be such that $k>i+2j$. Then $\pi\left((i,2j)\right)=(i,i+j)$ and $\pi\left((k,2l)\right)=(k,k+l)$. Since $k>i+2j$, then $k>i$ and $k+l>i+j$, hence $\pi[A\cap\bigcup_{n\in\omega}\omega\times\{2n\}]$ is covered by a graph of a nondecreasing function. Similarly, if $(i,2j+1),(k,2l+1)$ belonging to $A\cap\bigcup_{n\in\omega}\omega\times\{2n+1\}$ are such that $k>i+2j+1$, then $\pi\left((i,2j+1)\right)=(i+j+1,i)$ and $\pi\left((k,2l+1)\right)=(k+l+1,k)$. Moreover $k+l+1>i+j+1$ and $k>i$. Therefore also in this case $\pi[A\cap\bigcup_{n\in\omega}\omega\times\{2n+1\}]$ is covered by a graph of a nondecreasing function.
\end{proof}

\begin{cor}
\label{WR}
The ideal $\mathcal{WR}$ is another example of locally selective ideal which is not weakly Ramsey. 
\end{cor}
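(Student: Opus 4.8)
The statement splits into two halves, and one of them is already done: that $\mathcal{WR}$ is not weakly Ramsey is precisely Corollary \ref{w.R.}. So the plan is to concentrate entirely on the remaining assertion, that $\mathcal{WR}$ is locally selective, and to deduce it from Proposition \ref{zawieranie} (which gives $\mathcal{WR}\sqsubseteq\mathcal{ED}_\uparrow$) together with Example \ref{example} (which shows $\mathcal{ED}_\uparrow$ is locally selective).

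The bridge I would establish first is that local selectiveness passes downward along $\sqsubseteq$: if $\mathcal{I}\sqsubseteq\mathcal{J}$ and $\mathcal{J}$ is locally selective, then $\mathcal{I}$ is locally selective. Unwinding the definition, $\mathcal{I}\sqsubseteq\mathcal{J}$ supplies a bijection $f\colon\bigcup\mathcal{J}\to\bigcup\mathcal{I}$ with $A\in\mathcal{I}\Rightarrow f^{-1}[A]\in\mathcal{J}$, so the isomorphic copy $\mathcal{I}'=\{f^{-1}[A]:A\in\mathcal{I}\}$ of $\mathcal{I}$ lives on $\bigcup\mathcal{J}$ and satisfies $\mathcal{I}'\subseteq\mathcal{J}$. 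Because local selectiveness is invariant under isomorphism of ideals, it is enough to check that $\mathcal{I}'$ is locally selective. Now let $(Y_n)_{n\in\omega}\subset\mathcal{I}'$ be a partition of $\bigcup\mathcal{J}$. The inclusion $\mathcal{I}'\subseteq\mathcal{J}$ turns this into a partition into members of $\mathcal{J}$, so local selectiveness of $\mathcal{J}$ yields a $\mathcal{J}$-positive selector $S$; and the same inclusion gives $\mathcal{J}^+\subseteq(\mathcal{I}')^+$, whence $S$ is $\mathcal{I}'$-positive. Thus $S$ is the selector required for $\mathcal{I}'$, and $\mathcal{I}'$ — hence $\mathcal{I}$ — is locally selective.

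With this bridge in hand the corollary is immediate: Proposition \ref{zawieranie} gives $\mathcal{WR}\sqsubseteq\mathcal{ED}_\uparrow$, Example \ref{example} gives that $\mathcal{ED}_\uparrow$ is locally selective, and the bridge therefore forces $\mathcal{WR}$ to be locally selective as well. I do not anticipate any real obstacle, since all the substantive work sits in the results being quoted; the only point that warrants care is the direction of the implication — it is the \emph{larger} ideal $\mathcal{ED}_\uparrow$ whose local selectiveness is inherited by the smaller copy of $\mathcal{WR}$ — and this is exactly what the two inclusions $\mathcal{I}'\subseteq\mathcal{J}$ and $\mathcal{J}^+\subseteq(\mathcal{I}')^+$ encode. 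One should merely recall that images of $\mathcal{WR}$-sets under the bijection $\pi$ from Proposition \ref{zawieranie} remain in $\mathcal{ED}_\uparrow$ (since $\pi[\,\cdot\,]$ commutes with finite unions and subsets), so that the embedded copy of $\mathcal{WR}$ is genuinely contained in $\mathcal{ED}_\uparrow$.
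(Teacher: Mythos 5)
Your proposal is correct, but it transfers local selectiveness from $\mathcal{ED}_\uparrow$ to $\mathcal{WR}$ by a different mechanism than the paper does. The paper argues by contradiction through the critical ideal $\mathcal{ED}$: it invokes the characterization quoted in the introduction (an ideal fails to be locally selective if and only if it contains an isomorphic copy of $\mathcal{ED}$), so that if $\mathcal{WR}$ were not locally selective one would have $\mathcal{ED}\sqsubseteq\mathcal{WR}\sqsubseteq\mathcal{ED}_\uparrow$, hence $\mathcal{ED}\sqsubseteq\mathcal{ED}_\uparrow$ by composing the embeddings, contradicting the local selectiveness of $\mathcal{ED}_\uparrow$ established in Example \ref{example}. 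You bypass $\mathcal{ED}$ entirely and prove the transfer directly: your bridge lemma that local selectiveness passes downward along $\sqsubseteq$ is sound --- the copy $\left\{\pi[A]:A\in\mathcal{WR}\right\}$ is a subideal of $\mathcal{ED}_\uparrow$, any partition into sets of the subideal is a partition into sets of $\mathcal{ED}_\uparrow$, and an $\mathcal{ED}_\uparrow$-positive selector is automatically positive for the smaller ideal, with isomorphism-invariance of local selectiveness closing the loop. Both arguments rest on the same two substantive ingredients (Proposition \ref{zawieranie} and Example \ref{example}); what yours buys is self-containedness, since it avoids the external $\mathcal{ED}$-characterization cited from \cite{Katetov} and \cite{Meza}, and it isolates a reusable general fact --- your lemma is exactly the statement that ``not locally selective'' is upward-closed under $\sqsubseteq$, i.e., the elementary half of that characterization. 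What the paper's route buys is brevity, the characterization being already on record in the introduction. Your handling of the other half of the corollary, quoting Corollary \ref{w.R.} for the failure of weak Ramseyness, coincides with the paper's.
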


\begin{proof}
Indeed, if $\mathcal{WR}$ would not be locally selective, then it would contain an isomorphic copy of $\mathcal{ED}$. However, by Proposition \ref{zawieranie} we have that $\mathcal{WR}\sqsubseteq\mathcal{ED}_\uparrow$ and $\mathcal{ED}_\uparrow$ does not contain an isomorphic copy of $\mathcal{ED}$. A contradiction. On the other hand, $\mathcal{WR}$ is not weakly Ramsey by Corollary \ref{w.R.}.
\end{proof}

By Proposition \ref{zawieranie} $\mathcal{WR}\sqsubseteq\mathcal{ED}_\uparrow$. Next result shows that $\mathcal{WR}$ and $\mathcal{ED}_\uparrow$ are different ideals. Namely, $\mathcal{WR}$ and $\mathcal{ED}_\uparrow$ are not $\sqsubseteq$-equivalent. It shows us that the critical ideal for weak Ramseyness cannot be simplified. Also we can conclude that there are at least two isomorphic types of locally selective ideals which are not weakly Ramsey.

\begin{prop}
Ideals $\mathcal{WR}$ and $\mathcal{ED}_\uparrow$ are not $\sqsubseteq$-equivalent.
\end{prop}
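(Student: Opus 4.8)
The plan is to leverage the already-established fact $\mathcal{WR}\sqsubseteq\mathcal{ED}_\uparrow$ (Proposition \ref{zawieranie}): since $\sqsubseteq$-equivalence requires both directions, it suffices to prove $\mathcal{ED}_\uparrow\not\sqsubseteq\mathcal{WR}$. The two ingredients I would invoke are Theorem \ref{Mon}, which gives that \emph{every} ideal isomorphic to $\mathcal{WR}$ is Mon, and the preceding Remark, which produces a bijection $\pi\colon\omega\to\omega\times\omega$ (equivalently, an isomorphic copy $\mathcal{J}$ of $\mathcal{ED}_\uparrow$ on $\omega$) together with a real sequence $(x_n)_{n\in\omega}$ such that $(x_n)\upharpoonright M$ monotone implies $M\in\mathcal{J}$; that is, $(x_n)$ witnesses that $\mathcal{J}$ is not Mon.

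Suppose toward a contradiction that $\mathcal{ED}_\uparrow\sqsubseteq\mathcal{WR}$. Since $\mathcal{J}$ is isomorphic to $\mathcal{ED}_\uparrow$, isomorphic ideals are $\sqsubseteq$-equivalent, and $\sqsubseteq$ is transitive, we get $\mathcal{J}\sqsubseteq\mathcal{WR}$; fix a witnessing bijection $h\colon\omega\times\omega\to\omega$, so that $A\in\mathcal{J}\Rightarrow h^{-1}[A]\in\mathcal{WR}$, equivalently $B\in\mathcal{WR}^+\Rightarrow h[B]\in\mathcal{J}^+$. The idea is now to transport the bad sequence back to $\omega\times\omega$ along $h$ and apply the Mon property of $\mathcal{WR}$ \emph{for the enumeration induced by $h$}. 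Concretely, I would set $z_b=x_{h(b)}$ for $b\in\omega\times\omega$ and order $\omega\times\omega$ by $b\prec b'\Leftrightarrow h(b)<h(b')$.

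The key point is that Theorem \ref{Mon} gives the Mon property for every isomorphic copy of $\mathcal{WR}$, in particular for the copy attached to the $\prec$-increasing enumeration $e=h^{-1}\colon\omega\to\omega\times\omega$. Applying Mon to the sequence $(z_{e(n)})_{n\in\omega}$ yields a $\mathcal{WR}$-positive set $B$ on which $(z_b)_{b\in B}$ is $\prec$-monotone. By construction $b\mapsto h(b)$ is an order isomorphism of $(\omega\times\omega,\prec)$ onto $(\omega,<)$ and $z_b=x_{h(b)}$, so this says precisely that $(x_m)\upharpoonright h[B]$ is monotone in the usual order of $\omega$. But $B\in\mathcal{WR}^+$ forces $h[B]\in\mathcal{J}^+$, giving a monotone subsequence of $(x_m)$ indexed by a $\mathcal{J}$-positive set, contrary to the choice of $(x_n)$. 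Hence $\mathcal{ED}_\uparrow\not\sqsubseteq\mathcal{WR}$, and with Proposition \ref{zawieranie} this shows $\mathcal{WR}$ and $\mathcal{ED}_\uparrow$ are not $\sqsubseteq$-equivalent.

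I expect the only genuinely delicate point to be the bookkeeping of linear orders: ``having a monotone subsequence'' depends on the order in which the index set is listed, so Mon is \emph{not} an isomorphism invariant for a general ideal. The argument works exactly because Theorem \ref{Mon} supplies the Mon property of $\mathcal{WR}$ robustly, for all enumerations simultaneously, which lets me choose the enumeration $\prec$ to be the $h$-pullback of the very order in which $(x_n)$ fails to admit a positive monotone subsequence. Verifying that this pullback lines up --- so that $\prec$-monotonicity of $(z_b)$ transfers to ordinary monotonicity of $(x_m)\upharpoonright h[B]$ --- is the single step that must be carried out with care.
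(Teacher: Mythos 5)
Your proof is correct, but it takes a genuinely different route from the paper's. The paper argues directly: assuming a bijection $\sigma\colon\omega\times\omega\to\omega\times\omega$ witnesses $\mathcal{ED}_\uparrow\sqsubseteq\mathcal{WR}$, it considers the preimages $X_n$ of the vertical lines, splits into the same two cases as in Example \ref{example}, and uses Remark \ref{uwaga} to build a set $S\notin\mathcal{WR}$ whose image $\sigma[S]$ is covered by the graph of a nondecreasing function, contradicting the witnessing property of $\sigma$. You instead reduce the statement to two facts proved earlier in the paper: Theorem \ref{Mon} and the (unlabeled) Remark preceding Proposition \ref{zawieranie}, which produces a non-Mon isomorphic copy $\mathcal{J}$ of $\mathcal{ED}_\uparrow$ on $\omega$. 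Your transfer step is sound and can be stated even more simply than you did: a witnessing bijection $h$ for $\mathcal{J}\sqsubseteq\mathcal{WR}$ yields the ideal $\mathcal{K}=\left\{M\subseteq\omega:h^{-1}[M]\in\mathcal{WR}\right\}$ on $\omega$, which is an isomorphic copy of $\mathcal{WR}$ containing $\mathcal{J}$; by Theorem \ref{Mon} the ideal $\mathcal{K}$ is Mon, and Mon passes to subideals (any $M\notin\mathcal{K}$ is also not in $\mathcal{J}$), so $\mathcal{J}$ would be Mon --- a contradiction. This also dissolves the ordering issue you flag as delicate: since $z_{e(n)}=x_{h(h^{-1}(n))}=x_n$, your pulled-back sequence is literally the original one, and you only ever apply the ordinary Mon property of $\mathcal{K}$ to $(x_n)$; no monotonicity in a transported order is needed. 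As for what each approach buys: yours is shorter, conceptual, and isolates a reusable principle (every ideal on $\omega$ lying $\sqsubseteq$-below $\mathcal{WR}$ is Mon, so failure of Mon obstructs embeddability into $\mathcal{WR}$), while the paper's is self-contained within the Section 4 combinatorics and exhibits the obstruction explicitly; note, though, that your route does not avoid the combinatorial work but relocates it into the paper's proofs of Theorem \ref{Mon} and of the non-Mon Remark, both of which precede this Proposition, so there is no circularity.
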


\begin{proof}
We will show that $\mathcal{WR}$ does not contain an isomorphic copy of $\mathcal{ED}_\uparrow$. Suppose otherwise and denote by $X_n$ the preimage of the $n$-th vertical line under the bijection $\sigma:\omega\times\omega\rightarrow\omega\times\omega$ witnessing that $\mathcal{ED}_\uparrow\sqsubseteq\mathcal{WR}$. The proof will follow the same scheme as in Example \ref{example}. Observe that each $X_n$ has infinite intersection only with finitely many vertical lines. We will construct a set $S$ not belonging to $\mathcal{WR}$ and such that $\sigma[S]$ is covered by a graph of a nondecreasing function (so $\sigma[S]$ is in $\mathcal{ED}_\uparrow$). There are two possible cases:\\
{\bf Case 1.} There are infinitely many vertical lines, on which some $X_n$ is infinite. In this case, there is an infinite set $T=\left\{t_0<t_1<\ldots\right\}$ such that for each $n$ there is $k(t_n)$ with $X_{k(t_n)}\cap\left(\left\{t_n\right\}\times\omega\right)$ infinite. We can assume that $k(t_n)<k(t_m)$ for $n<m$ by picking an increasing subsequence of the sequence $\left(k(t_n)\right)_{n\in\omega}$. Inductively pick points $a_i\in\omega\times\omega$ for $i\in\omega$. At the end $S$ will consist of all $a_i$. Let $a_0$ be any point in $X_{k(t_0)}\cap\left(\left\{t_0\right\}\times\omega\right)$. Suppose that $a_j$, for $j<i$, are constructed. Let $a_i$ be any point in $X_{k(t_i)}\cap\left(\left\{t_i\right\}\times\omega\right)$ such that $\sum a_i>t_{2i}$ and $$\textrm{proj}_2\left(\sigma\left(a_{i}\right)\right)>\textrm{proj}_2\left(\sigma\left(a_{i-1}\right)\right).$$ 
We also have
$$\textrm{proj}_1\left(\sigma\left(a_{i}\right)\right)=k(t_i)>k(t_{i-1})=\textrm{proj}_1\left(\sigma\left(a_{i-1}\right)\right).$$ 
Define $S=\left\{a_i:i\in\omega\right\}$. Then $\sigma[S]$ is in $\mathcal{ED}_\uparrow$. On the other hand, suppose that $S$ is in $\mathcal{WR}$. Then there is $k$ such that $S$ is covered by $k$ many generators of the second type of the ideal $\mathcal{WR}$. However it is impossible by Remark \ref{uwaga} applied to points $a_p,\ldots,a_{p+k+1}$, where $p=1+2+\ldots +k$, since $\sum a_{p+i}>t_{2p}>t_{p+k+1}$ for $i\leq k+1$. \\
{\bf Case 2.} There are infinitely many vertical lines intersecting infinitely many $X_n$'s. In this case, there is an infinite set $T=\left\{t_0<t_1<\ldots\right\}$ such that for each $n$ the set 
$$\left\{i:X_i\cap(\left\{t_n\right\}\times\omega)\neq\emptyset\right\}$$
is infinite. Inductively pick points $a_i$, for $i\in\omega$, as follows. Let $a_0$ be any point in $\left\{t_0\right\}\times\omega$. Suppose that $a_j$, for $j<i$, are constructed and let $k$ be such that $a_{i-1}\in X_k$. Let $$a_i\in\left(\left\{t_{i}\right\}\times\omega\right)\cap\bigcup_{n>k}X_n,$$
be such that $\sum a_i>t_{2i}$ and $$\textrm{proj}_2\left(\sigma\left(a_{i}\right)\right)>\textrm{proj}_2\left(\sigma\left(a_{i-1}\right)\right).$$
Define $S=\left\{a_i:i\in\omega\right\}$. Then $\sigma[S]$ is in $\mathcal{ED}_\uparrow$, however $S$ is not in $\mathcal{WR}$ by Remark \ref{uwaga} for the same reasons as in Case $1$.\\
A contradiction. Hence, $\mathcal{WR}$ does not contain an isomorphic copy of $\mathcal{ED}_\uparrow$.
\end{proof}

\section{Characterization of weakly Ramsey ideals}

In this Section we write $(i,j)\sqsubset (k,l)$ ($(i,j)\sqsubseteq (k,l)$) for $(i,j),(k,l)\in\omega\times\omega$ if $i<k$ ($i\leq k$). Similarly, we write $(i,j)\sqsupset (k,l)$ ($(i,j)\sqsupseteq (k,l)$) if $i>k$ ($i\geq k$). 

\begin{defin}
\label{QC-pomocnicze}
For any function $\pi\colon\omega\times\omega\to\omega$ with
\begin{itemize}
	\item[(a)] $\pi[\omega\times\omega]=\omega$, i.e., $\pi$ is onto,
	\item[(b)] $\pi$ is finite-to-one,
\end{itemize}
let $\mathcal{WR}^\pi$ be an ideal on $\omega\times\omega$ generated by vertical lines (which we call \emph{generators of the first type of the ideal $\mathcal{WR}^\pi$}) and sets $G=\left\{g_0\sqsubset g_1\sqsubset\ldots\right\}$, such that $\pi(g_i)<\pi(g_j)$ and $g_j\sqsupseteq (\pi(g_i),0)$ for all $i<j$ (which we call \emph{generators of the second type of the ideal $\mathcal{WR}^\pi$}).
\end{defin}

\begin{rem}
\label{QC-uwaga}
Notice that $\mathcal{WR}$ is of the form $\mathcal{WR}^\pi$. Indeed, consider the function $\hat{\pi}\colon\omega\times\omega\to\omega$ given by $\hat{\pi}((i,j))=i+j+1$. Let $\pi\colon\omega\times\omega\to\omega$ be such that $\pi((i,j))=\hat{\pi}((i,j))$ for $(i,j)\neq (0,1)$ and $\pi((0,1))=0$. It is easy to see that $\pi$ satisfies conditions $(a)$ and $(b)$ from Definition \ref{QC-pomocnicze} and $\mathcal{WR}^{\pi}$ is equal to the ideal generated by vertical lines and sets $G=\left\{g_0\sqsubset g_1\sqsubset\ldots\right\}$, such that $\hat{\pi}(g_i)<\hat{\pi}(g_j)$ and $g_j\sqsupseteq (\hat{\pi}(g_i),0)$ for all $i<j$. We will show that $\mathcal{WR}=\mathcal{WR}^{\pi}$. Vertical lines are generators of both $\mathcal{WR}$ and $\mathcal{WR}^{\pi}$. If $G$ is a generator of the second type of the ideal $\mathcal{WR}$, then for any $(i,j),(k,l)\in G$ with $i+j<k$ we have $(i,j)\sqsubset (k,l)$ and $(i+j+1,0)\sqsubseteq (k,l)$. Moreover
$$\hat{\pi}((i,j))=i+j+1\leq k<k+l+1=\hat{\pi}((k,l)).$$
Therefore $G\in\mathcal{WR}^{\pi}$. On the other hand, let $G=\left\{g_0\sqsubset g_1\sqsubset\ldots\right\}\in\mathcal{WR}^{\pi}$ be such that $\hat{\pi}(g_i)<\hat{\pi}(g_j)$ and $(\hat{\pi}(g_i),0)\sqsubseteq g_j$ for all $i<j$. Then for any $(i,j),(k,l)\in G$ such that $i<k$, (i.e., $(i,j)\sqsubset(k,l)$) we have $(i+j+1,0)=(\hat{\pi}((i,j)),0)\sqsubseteq (k,l)$, so in particular $k>i+j$. Hence $G$ is a generator of the second type of the ideal $\mathcal{WR}$.
\end{rem}

\begin{lem}
\label{dense}
$\mathcal{WR}$ and $\mathcal{WR}^\pi$ are dense ideals.
\end{lem}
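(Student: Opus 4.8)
The plan is to reduce both claims to a single argument. By Remark \ref{QC-uwaga}, the ideal $\mathcal{WR}$ is itself of the form $\mathcal{WR}^\pi$ for a suitable $\pi$ satisfying conditions (a) and (b) of Definition \ref{QC-pomocnicze}, so it suffices to prove that $\mathcal{WR}^\pi$ is dense for an arbitrary such $\pi$. I would fix $\pi$ and an arbitrary infinite $A\subseteq\omega\times\omega$, and produce an infinite $B\subseteq A$ with $B\in\mathcal{WR}^\pi$.

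First I would split into two cases according to how $A$ meets the vertical lines. If some vertical line $\{i\}\times\omega$ has infinite intersection with $A$, then $B=A\cap(\{i\}\times\omega)$ is an infinite subset of a generator of the first type, hence lies in $\mathcal{WR}^\pi$, and we are done. Otherwise $A$ meets every vertical line in a finite set, so for every $N$ the set $A\cap(\{0,\ldots,N\}\times\omega)$ is finite; equivalently, $A$ contains points of arbitrarily large first coordinate.

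In this second case I would construct an infinite generator of the second type $B=\{g_0\sqsubset g_1\sqsubset\cdots\}\subseteq A$ by induction. Pick $g_0\in A$ arbitrarily. Having chosen $g_0,\ldots,g_{n-1}$, set $N=\max_{m<n}\max(\textrm{proj}_1(g_m),\pi(g_m))$ and consider $A'=\{g\in A:\textrm{proj}_1(g)>N\}$, which is infinite by the Case~2 assumption. Since $\pi$ is finite-to-one, the set $\{g\in\omega\times\omega:\pi(g)\leq\max_{m<n}\pi(g_m)\}$ is finite, so $A'$ contains a point $g_n$ with $\pi(g_n)>\max_{m<n}\pi(g_m)$. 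For every $m<n$ the inequality $\textrm{proj}_1(g_n)>N\geq\textrm{proj}_1(g_m)$ gives $g_m\sqsubset g_n$, while $\textrm{proj}_1(g_n)>N\geq\pi(g_m)$ gives $g_n\sqsupseteq(\pi(g_m),0)$; together with $\pi(g_m)<\pi(g_n)$ this shows that $B=\{g_n:n\in\omega\}$ is an infinite generator of the second type of $\mathcal{WR}^\pi$, hence $B\in\mathcal{WR}^\pi$.

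The only genuine subtlety — and the step I would be most careful about — is the simultaneous control required in the inductive step: $g_n$ must have both a large first coordinate (to secure $g_m\sqsubset g_n$ and $g_n\sqsupseteq(\pi(g_m),0)$) and a large $\pi$-value (to secure $\pi(g_m)<\pi(g_n)$). This is exactly where the two hypotheses combine: the finiteness of $A$ on each vertical line (Case~2) keeps $A'$ infinite, while condition (b), the finite-to-one-ness of $\pi$, makes the competing $\pi$-small set finite, so their difference is nonempty and a suitable $g_n$ always exists.
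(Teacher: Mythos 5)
Your proof is correct and follows essentially the same route as the paper's: reduce $\mathcal{WR}$ to the $\mathcal{WR}^\pi$ form via Remark \ref{QC-uwaga}, split according to whether some vertical line meets $A$ infinitely, and otherwise inductively build an infinite generator of the second type inside $A$, using finiteness of $A$ on vertical lines to get candidates with large first coordinate and finite-to-one-ness of $\pi$ to get large $\pi$-values. The only (immaterial) difference is that you enforce the generator conditions against all previously chosen points, whereas the paper imposes them only against the last point and lets transitivity of $\sqsubset$ and of the inequalities do the rest.
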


\begin{proof}
Let $\pi:\omega\times\omega\rightarrow\omega$ be any function satisfying conditions $(a)$ and $(b)$ from Definition \ref{QC-pomocnicze}. Take $A\notin\mathcal{WR}^\pi$. If there is $i$ such that $A$ has infinite intersection with $\left\{i\right\}\times\omega$, then define $B=A\cap(\left\{i\right\}\times\omega)$. If $A$ has finite intersection with every vertical line, then $A$ intersects infinitely many such lines and we construct an infinite subset of $A$ belonging to the ideal. Take any $x_0\in A$. If $x_0,\ldots,x_k$ are constructed, then let 
$$x_{k+1}\in A\cap\left\{x:x_k\sqsubset x\wedge x\sqsupseteq (\pi(x_k),0)\wedge\pi(x_k)<\pi(x)\right\}.$$
Let $B=\left\{x_0,x_1,\ldots\right\}$. In both cases $B\subset A$ is infinite and $B\in\mathcal{WR}^\pi$.\\
The proof for $\mathcal{WR}$ follows from the above by Remark \ref{QC-uwaga}.
\end{proof}

The following lemma is crucial for the proof of Theorem \ref{MainTheorem}.

\begin{lem}
\label{lemat}
All ideals of the form $\mathcal{WR}^\pi$ are $\sqsubseteq$-equivalent. Moreover, they are $\sqsubseteq$-equivalent to the ideal $\mathcal{WR}$.
\end{lem}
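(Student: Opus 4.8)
The plan is to prove that all ideals $\mathcal{WR}^\pi$ are $\sqsubseteq$-equivalent to $\mathcal{WR}$ by establishing two containments for each such $\pi$: that $\mathcal{WR}^\pi$ contains an isomorphic copy of $\mathcal{WR}$ and that $\mathcal{WR}$ contains an isomorphic copy of $\mathcal{WR}^\pi$. By Remark \ref{QC-uwaga}, $\mathcal{WR}$ is itself of the form $\mathcal{WR}^{\pi}$, so once we know that any two ideals of this shape are $\sqsubseteq$-equivalent, the "moreover" clause is immediate. Since $\mathcal{WR}$ and every $\mathcal{WR}^\pi$ are dense (Lemma \ref{dense}), the criterion recalled in the introduction lets us build each copy using a mere $1$-$1$ map $f$ with $f^{-1}[A]$ in the target ideal for every $A$ in the source ideal, rather than a bijection; equivalently, it suffices to find a $1$-$1$ map sending generators (vertical lines and second-type generators) of the source into sets belonging to the target ideal.

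First I would fix the governing intuition: in $\mathcal{WR}^\pi$, a second-type generator is an $\sqsubset$-increasing sequence $\{g_0\sqsubset g_1\sqsubset\ldots\}$ along which the value $\pi(g_i)$ strictly increases \emph{and} each later point lies in the column region $g_j\sqsupseteq(\pi(g_i),0)$. The function $\pi$ plays exactly the role that $(i,j)\mapsto i+j+1$ plays for $\mathcal{WR}$ (as spelled out in Remark \ref{QC-uwaga}): it is a finite-to-one onto weight that orders the points, and the second-type generators are precisely the sets that are "spread out" with respect to this weight. The key structural fact I would isolate is that because $\pi$ is finite-to-one and onto, its level sets $\pi^{-1}(\{n\})$ are finite and partition $\omega\times\omega$, so one can relabel $\omega\times\omega$ by listing these level sets in order of $n$. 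I would then design, for two weights $\pi$ and $\pi'$, a $1$-$1$ map that respects both the column structure (vertical lines to vertical lines, or at worst into finitely many vertical lines) and the weight order, so that a set which is $\pi$-spread maps to a set which is $\pi'$-spread up to a bounded number of generators.

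The main work, and what I expect to be the principal obstacle, is constructing the map so that second-type generators are preserved in \emph{both} directions simultaneously while keeping the map injective. Concretely, given a second-type generator $G=\{g_0\sqsubset g_1\sqsubset\ldots\}$ of $\mathcal{WR}^\pi$, I must send it to a set that is covered by \emph{finitely many} second-type generators of the target ideal (plus finitely many vertical lines); it need not go to a single generator. The natural strategy is to route the construction through $\mathcal{WR}$ as a hub: show $\mathcal{WR}^\pi \sqsubseteq \mathcal{WR}$ and $\mathcal{WR}\sqsubseteq\mathcal{WR}^\pi$ directly, imitating the explicit map $\pi$ built in Proposition \ref{zawieranie}. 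For the direction $\mathcal{WR}\sqsubseteq\mathcal{WR}^\pi$, I would define the map by enumerating $\omega\times\omega$ so that points with small $\hat\pi$-value (i.e.\ small $i+j$) are sent to points with small $\pi$-value, using the finite-to-one property to absorb each $\hat\pi$-level set into finitely many $\pi$-level sets; then a set with $k>i+j$ separation in the source translates, after a bounded shift depending only on the finite fibers involved, into the strict $\pi$-increase plus column condition required in the target, with the bounded error costing only finitely many extra generators. The reverse direction is symmetric.

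I would then verify the generator-image condition case by case exactly as in the proofs of Proposition \ref{zawieranie} and the preceding Remark: first-type generators (vertical lines) map into finitely many vertical lines of the target and hence into the target ideal; second-type generators map into a bounded union of second-type generators, the bound arising from the finitely many fibers $\pi^{-1}(\{n\})$ crossed. The delicate point to check is injectivity together with the simultaneous preservation of the $\sqsubset$-order and the weight-order, which is where the finite-to-one hypothesis (b) and the ontoness hypothesis (a) are both essential; without (b) a single target column could receive infinitely many source points and destroy the second-type structure, and without (a) the weight could skip values and break the matching of level sets. Once both containments hold for arbitrary $\pi$, transitivity of $\sqsubseteq$ gives that any two $\mathcal{WR}^{\pi}$ and $\mathcal{WR}^{\pi'}$ are $\sqsubseteq$-equivalent, and Remark \ref{QC-uwaga} identifies $\mathcal{WR}$ as one of them, completing the proof.
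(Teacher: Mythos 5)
Your overall architecture matches the paper's: reduce via Lemma \ref{dense} and the density criterion to finding a $1$-$1$ map pulling generators back into the target ideal, handle an arbitrary pair of weights, and invoke Remark \ref{QC-uwaga} plus transitivity for the ``moreover'' clause. (The paper in fact treats an arbitrary ordered pair $\pi,\pi_0$ at once rather than routing through $\mathcal{WR}$ as a hub, but that difference is cosmetic.) The problem is that the core of the proof --- the actual construction of the injection and the verification --- is not carried out, and the sketch you give in its place would not work as stated.

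The central difficulty, which your level-set-matching idea does not address, is that a second-type generator of $\mathcal{WR}^\pi$ is constrained by \emph{two} coupled conditions: the weight must increase ($\pi(g_i)<\pi(g_j)$) \emph{and} every later point must lie in a column indexed at least by the weight of every earlier point ($g_j\sqsupseteq(\pi(g_i),0)$). Since $\pi$ is an arbitrary finite-to-one onto function, its level sets can be scattered across columns with no relation to the column index; hence a map that matches $\hat\pi$-level sets to $\pi$-level sets ``up to a bounded shift'' has no reason to preserve the column condition, and conversely a map respecting columns has no reason to respect weights. The two requirements must be controlled simultaneously, and this is exactly where the paper's proof does real work: it builds $\sigma=\bigcup_{n}\sigma_n\cup\sigma'$, where each $\sigma_n$ maps a block $\left\{a:(m_{n-1},0)\sqsubseteq a\sqsubset(m_n,0)\wedge\pi_0(a)>m_n\right\}$ onto an even column minus a finite set $A_n=\left\{a:a\sqsubset(2n+1,0)\wedge\pi(a)\leq 2n\right\}$, with the thresholds $m_n$ defined recursively from the $\pi_0$-values of the preimages of the $A_n$, while $\sigma'$ sends the leftover set $B$ of small-weight points into odd columns with rapidly increasing $\pi$-weights; the preimage of a second-type generator is then shown to split into a piece covered by two second-type generators (the even-column part, where the alternating-index trick and the definition of $m_k$ give $\pi_0(g_{n+1})>\pi_0(g_n)$ and $g_{n+2}\sqsupseteq(\pi_0(g_n),0)$) and a piece covered by one (the $\sigma[B]$ part). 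Your phrases ``bounded shift depending only on the finite fibers involved'' and ``costing only finitely many extra generators'' assert precisely what this bookkeeping is needed to prove, without providing it; likewise ``the reverse direction is symmetric'' is not a proof, since neither direction has been established. Imitating Proposition \ref{zawieranie} does not fill the gap either: that map exploits the concrete geometry of $\mathcal{ED}_\uparrow$ (graphs of nondecreasing functions), which has no analogue for an abstract weight $\pi$.
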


\begin{proof}
By Remark \ref{QC-uwaga} $\sqsubseteq$-equivalence of $\mathcal{WR}$ and the ideals $\mathcal{WR}^\pi$ is a consequence of $\sqsubseteq$-equivalence of the ideals $\mathcal{WR}^\pi$. We will show that for any functions $\pi,\pi_0:\omega\times\omega\rightarrow\omega$ satisfying conditions $(a)$ and $(b)$ from Definition \ref{QC-pomocnicze} there is a $1-1$ function $\sigma:\omega\times\omega\rightarrow\omega\times\omega$ such that $\sigma^{-1}[A]\in\mathcal{WR}^{\pi_0}$ for all $A\in\mathcal{WR}^\pi$.\\
We can assume that $\pi^{-1}\left[\left\{0\right\}\right]\cap\left(\left\{0\right\}\times\omega\right)\neq\emptyset$. Indeed, otherwise consider the function $\pi'$ such that $\pi'(0,0)=0$, $\pi'(0,n+1)=\pi(0,n)$ for $n\in\omega$ and $\pi'(a)=\pi(a)$ for $a\in(\omega\setminus\{0\})\times\omega$. Then it is easy to see that $\mathcal{WR}^{\pi'}=\mathcal{WR}^\pi$.\\
We define $\sigma$ inductively by picking a partial $1-1$ function $\sigma'$ and a sequence $\left(\sigma_n\right)_{n\in\omega}$ of partial $1-1$ functions, where $\sigma'$ and all $\sigma_n$ are defined on pairwise disjoint subsets of $\omega\times\omega$ and have pairwise disjoint ranges. At the end $\sigma=\bigcup_{n\in\omega}\sigma_n\cup\sigma'$. Firstly we deal with the sequence $\left(\sigma_n\right)_{n\in\omega}$ ($\sigma'$ will be defined in the further part of the proof).\\
In order to define $\left(\sigma_n\right)_{n\in\omega}$ we need to introduce a sequence of finite sets $\left(A_n\right)_{n\in\omega}$. Those sets will play double role: the range of each $\sigma_n$ will be exactly $\left\{2n\right\}\times\omega\setminus A_n$ and the domain of each $\sigma_n$ will depend on $A_n$ 
in a more complicated way. The sequence is defined as follows: $A_0=\emptyset$ and
$$A_n=\left\{a:a\sqsubset(2n+1,0)\wedge\pi(a)\leq2n\right\}$$
for $n>0$. Observe that $\left(A_n\right)_{n\in\omega}$ is nondecreasing. Moreover, sets $A_n$ for $n>0$ are finite and nonempty since $\pi^{-1}\left[\left\{0\right\}\right]\cap\left(\left\{0\right\}\times\omega\right)\neq\emptyset$.\\
The inductive construction of partial $1-1$ functions $\sigma_n$ requires an auxiliary nondecreasing sequence $\left(m_n\right)_{n\in\omega}\subset\omega$. The value of $m_n$ will depend on all $\sigma_j$, for $j<n$, and $\sigma_n$ will depend on all $m_j$ for $j\leq n$. We start with $m_0=1$ and $\sigma_0\colon\left\{0\right\}\times\omega\to\left\{0\right\}\times\omega$ equal to identity. If $m_0,\ldots,m_{n-1}$ and $\sigma_0,\ldots,\sigma_{n-1}$ are constructed, then 
$$m_n=\max\pi_0\left[\bigcup_{j<n}\sigma_j^{-1}\left[A_n\cap\left(\left\{2j\right\}\times\omega\setminus A_j\right)\right]\right].$$
Observe that $m_n$ is defined correctly, since each $\sigma_j$, for $j<n$, is $1-1$ and the sets $A_n$, for $n>0$, are finite nonempty. Moreover, those sets constitute a nondecreasing sequence, hence $m_n\geq m_{n-1}$. Finally, let
$$\sigma_n\colon\left\{a:(m_{n-1},0)\sqsubseteq a\sqsubset (m_n,0)\wedge \pi_0(a)>m_n\right\}\to\left\{2n\right\}\times\omega\setminus A_n$$
be any bijection (if the domain of $\sigma_n$ is empty, i.e., $m_{n-1}=m_n$, then $\sigma_n=\emptyset$). Observe that $\sigma_n$ defined in this way is $1-1$ and has pairwise disjoint domain and range with each $\sigma_j$ for $j<n$. This ends the construction of $\left(\sigma_n\right)_{n\in\omega}$.\\
Now we deal with $\sigma'$ defined on $B=\bigcup_{n\in\omega}B_n$, where
$$B_n=\left\{a:(m_{n-1},0)\sqsubseteq a\sqsubset (m_n,0)\wedge \pi_0(a)\leq m_n\right\}$$ 
(note that $\bigcup_{n\in\omega}\sigma_n$ is defined on the complement of $B$). Observe that $B=\bigcup_{n\in\omega}B_n$ has finite intersection with every vertical line. Enumerate $B=\left\{b_0,b_1,\ldots\right\}$ in such a way that $\pi_0(b_0)\leq\pi_0(b_1)\leq\ldots$ and if $\pi_0(b_i)=\pi_0(b_{i+1})$ then $b_{i+1}\sqsubset b_i$. Define functions $f,h:\omega\rightarrow\omega$ by
$$f(n)=\left|\left\{j:b_j\sqsubset \left(\pi_0\left(b_n\right),0\right)\right\}\right|$$
and
$$h(n)=\left|\left\{j:b_j\sqsubset b_n\right\}\right|.$$
Now we can define $\sigma'$:
\begin{itemize}
	\item $\sigma'(b_0)$ is any element of $\left\{2h(0)+1\right\}\times\omega$ with $\pi(\sigma'(b_0))>2f(0)+1$.
	\item $\sigma'(b_{k+1})$ is any element of $\left\{2h(k+1)+1\right\}\times\omega$ with $\pi(\sigma'(b_{k+1}))>2f(k+1)+1$ and $\pi(\sigma'(b_{k+1}))>\pi(\sigma'(b_{k}))$.
\end{itemize}
It is easy to see that $\sigma=\bigcup_{n\in\omega}\sigma_n\cup\sigma'$ is a $1-1$ function defined on $\omega\times\omega$, since each partial function $\sigma_n$ and $\sigma'$ is $1-1$ and those functions have pairwise disjoint domains and ranges.\\
We will show that $\sigma$ is as needed, i.e., $\sigma^{-1}[A]\in\mathcal{WR}^{\pi_0}$ for all $A\in\mathcal{WR}^\pi$. Observe that preimages under $\sigma$ of even vertical lines are covered by finitely many vertical lines and preimages under $\sigma$ of odd vertical lines are finite. Therefore preimages under $\sigma$ of generators of the first type of the ideal $\mathcal{WR}^\pi$ are in $\mathcal{WR}^{\pi_0}$. Assume that $G$ is a generator of the second type of $\mathcal{WR}^\pi$. We have $$\sigma^{-1}[G]=\sigma^{-1}[G\cap\sigma[B]]\cup\sigma^{-1}[G\cap\left(\omega\times\omega\setminus \sigma[B]\right)],$$ so it suffices to check if $\sigma^{-1}[G\cap\sigma[B]]$ and $\sigma^{-1}[G\cap(\omega\times\omega\setminus \sigma[B])]$ are in $\mathcal{WR}^{\pi_0}$. We first deal with the second set. Assume that $\sigma^{-1}[G\cap(\omega\times\omega\setminus \sigma[B])]=\left\{g_0\sqsubset g_1\sqsubset\ldots\right\}$. We will show that this set is covered by two generators of the second type of the ideal $\mathcal{WR}^{\pi_0}$ -- one consisting of $g_i's$ with even indexes and second consisting of $g_i's$ with odd indexes. We must show that $\pi_0(g_{n+1})>\pi_0(g_n)$ and $g_{n+2}\sqsupseteq(\pi_0(g_n),0)$ for each $n$. By the construction of $\sigma$ we have $\sigma(g_i)\sqsubset \sigma(g_j)$. Hence, since $\sigma(g_i)'s$ constitute a generator of the second type $G$ of the ideal $\mathcal{WR}^\pi$, we have also $\pi(\sigma(g_i))<\pi(\sigma(g_j))$ and $(\pi(\sigma(g_i)),0)\sqsubseteq \sigma(g_j)$, for $i<j$.\\
Take $n\in\omega$ and suppose that $\sigma(g_{n+1})\in\left\{2k\right\}\times\omega\setminus A_k$. Then we have $\pi_0(g_{n+1})>m_k$, since only points satisfying this condition go on $\left\{2k\right\}\times\omega\setminus A_k$. By $\sigma(g_{n+1})\sqsupseteq(\pi(\sigma(g_n)),0)$ we have also $\pi(\sigma(g_n))\leq 2k$. Moreover $\sigma(g_n)\sqsubset(2k+1,0)$, by $g_n\sqsubset g_{n+1}$. Hence $\sigma(g_n)\in A_k$. By the definition of $m_k$ we get that $m_k\geq\pi_0(g_n)$. Therefore $\pi_0(g_{n+1})>\pi_0(g_n)$ since $\pi_0(g_{n+1})>m_k$. Observe also that
$$g_{n+2}\sqsupseteq (m_{k},0)\sqsupseteq (\pi_0(g_n),0),$$
since $\sigma(g_{n+2})\in\bigcup_{i>k}\left\{2i\right\}\times\omega\setminus A_i$ (because $\sigma(g_{n+1})\in\left\{2k\right\}\times\omega\setminus A_k$ and $g_{n+1}\sqsubset g_{n+2}$). Hence $\sigma^{-1}[G\cap(\omega\times\omega\setminus \sigma[B])]$ is covered by two generators of the second type of the ideal $\mathcal{WR}^{\pi_0}$.\\
Now we deal with the set $\sigma^{-1}[G\cap\sigma[B]]$. It is equal to $\left\{b_{n_0}, b_{n_1},\ldots\right\}$ for some increasing subsequence $\left(n_i\right)_{i\in\omega}$. We will show that $\sigma^{-1}[G\cap\sigma[B]]$ can be covered by one generator of the second type of the ideal $\mathcal{WR}^{\pi_0}$. Take $i,j\in\omega$ such that $i<j$. By the construction of the partial function $\sigma'$ we have $\pi(\sigma(b_{n_{j}}))>\pi(\sigma(b_{n_{i}}))$. Therefore we get $\sigma(b_{n_{i}})\sqsubset\sigma(b_{n_{j}})$, since $G$ is a generator of the second type of the ideal $\mathcal{WR}^{\pi}$. As $\sigma(b_{n_i})\in\left\{2h(n_i)+1\right\}\times\omega$ and $\sigma(b_{n_j})\in\left\{2h(n_j)+1\right\}\times\omega$, then $h(n_i)< h(n_j)$. Hence, by the definition of the function $h$ we have $b_{n_i}\sqsubset b_{n_j}$. Moreover, by the properties of the picked enumeration of the set $B$ we have $\pi_0(b_{n_i})\leq\pi_0(b_{n_j})$ (since $n_i<n_j$) and even $\pi_0(b_{n_i})<\pi_0(b_{n_j})$ (since $b_{n_i}\sqsubset b_{n_j}$).\\
We have $\sigma(b_{n_{j}})\in\left\{2h(n_j)+1\right\}\times\omega$ and
$$\sigma(b_{n_{j}})\sqsupseteq (\pi(\sigma(b_{n_{i}})),0)\sqsupseteq (2f(n_{i})+1,0).$$
Therefore $h(n_j)>f(n_i)$ and by the definition of the function $f$ we get that $(\pi_0(b_{n_i}),0)\sqsubseteq b_{n_j}$, which concludes the proof of the fact that $\sigma^{-1}[G\cap\sigma[B]]$ can be covered by one generator of the second type of the ideal $\mathcal{WR}^{\pi_0}$ and the proof of the entire Lemma.
\end{proof}

Now we are ready to prove Theorem \ref{MainTheorem}.

\begin{proof}[Proof of Theorem \ref{MainTheorem}]
Without loss of generality we can assume that $\mathcal{I}$ is an ideal on $\omega$.\\
{\bf(2) $\Rightarrow$ (3):} Obvious.\\
{\bf(3) $\Rightarrow$ (1):} Assume that $\mathcal{WR}\leq_K\mathcal{I}$. By Corollary \ref{w.R.} and Proposition \ref{EquivalentConditions} the ideal $\mathcal{WR}$ is not weakly Ramsey as witnessed by the coloring $\lambda\colon \left[\omega\times\omega\right]^2\to 2$ given by
$$\lambda\left(\left\{\left(i,j\right),\left(k,l\right)\right\}\right)=\left\{\begin{array}{ll}
0 & \mbox{\boldmath{, if }} k>i+j\\
1 & \mbox{\boldmath{, if }} k\leq i+j\\
\end{array}\right.$$
for all $\left(i,j\right)$ smaller than $\left(k,l\right)$ in the lexicographical order.\\
We will show that $\mathcal{I}$ is not weakly Ramsey. Suppose that $f:\omega\rightarrow\omega\times\omega$ witnesses that $\mathcal{WR}\leq_K\mathcal{I}$. Define a coloring $\chi\colon [\omega]^2\to 2$ by
$$\chi\left(\left\{n,m\right\}\right)=\left\{\begin{array}{ll}
\lambda\left(\left\{f(n),f(m)\right\}\right) & \mbox{\boldmath{, if }} f(n)\neq f(m)\\
1 & \mbox{\boldmath{, if }} f(n)=f(m)\\
\end{array}\right.$$
for $n,m\in\omega$ with $n\neq m$. We have 
$$\left\{m\in \omega:\chi\left(\left\{n,m\right\}\right)=1\right\}\in\mathcal{I}$$
for all $n\in\omega$ since 
$$\left\{b\in \omega\times\omega:\lambda\left(\left\{a,b\right\}\right)=1\right\}\in \mathcal{WR}$$ 
for each $a\in\omega\times\omega$ and
$$f^{-1}[\left\{b\in \omega\times\omega:\lambda\left(\left\{f(n),b\right\}\right)=1\right\}\cup \{f(n)\}]=\left\{m\in \omega:\chi\left(\left\{n,m\right\}\right)=1\right\}.$$
Suppose that $H\subset\omega$ is such that $\chi\upharpoonright[H]^2$ is constant. Then also $\lambda\upharpoonright[f[H]]^2$ is constant, so $f[H]$ is in $\mathcal{WR}$. Since $f$ witnesses that $\mathcal{WR}\leq_K\mathcal{I}$, we have $H\subset f^{-1}[f[H]]\in\mathcal{I}$. Hence, $\mathcal{I}$ is not weakly Ramsey.\\
{\bf(1) $\Rightarrow$ (2):} Suppose that $\mathcal{I}$ is not weakly Ramsey. Then by condition $4.$ from Proposition \ref{EquivalentConditions} there is a partition $(X_n)_{n\in\omega}\subset\mathcal{I}$ of $\omega$, such that $h[\omega]\in\mathcal{I}$ for all increasing functions $h:\omega\rightarrow\omega$, with $h(n+1)\in\bigcup_{i>h(n)}X_i$ for each $n\in\omega$.\\
Assume first that all $X_n$'s are infinite. We will find a bijection $\pi:\omega\times\omega\rightarrow\omega$ such that $\pi[A]\in\mathcal{I}$ for all $A\in\mathcal{WR}^\pi$. Then by Lemma \ref{lemat} we have $\mathcal{WR}\sqsubseteq\mathcal{I}$. Let $\pi:\omega\times\omega\rightarrow\omega$ be a bijection such that $\pi^{-1}[X_n]=\left\{n\right\}\times\omega$ for $n\in\omega$. Images of all vertical lines are in $\mathcal{I}$ and if $G=\left\{g_0\sqsubset g_1\sqsubset\ldots\right\}$ is such that $\pi(g_i)<\pi(g_j)$ and $g_j\sqsupseteq (\pi(g_i),0)$, for $i<j$, then $h_0,h_1:\omega\rightarrow\omega$ given by $h_0(n)=\pi(g_{2n})$ and $h_1(n)=\pi(g_{2n+1})$ are increasing. Moreover
$$h_0(n+1)=\pi(g_{2n+2})\in\bigcup_{i\geq\pi(g_{2n+1})}\pi[\left\{i\right\}\times\omega]\subset\bigcup_{i>\pi(g_{2n})}\pi[\left\{i\right\}\times\omega]=\bigcup_{i>h_0(n)}X_i,$$
since $(\pi(g_{2n+1}),0)\sqsubseteq g_{2n+2}$, so
$$g_{2n+2}\in\bigcup_{i\geq\pi(g_{2n+1})}(\left\{i\right\}\times\omega).$$
Similarly, $h_1(n+1)\in\bigcup_{i>h_1(n)}X_i$. Hence, $\pi[G]=h_0[\omega]\cup h_1[\omega]\in\mathcal{I}$.\\
Now we proceed to the general case. We will find a bijection $\pi:\omega\times\omega\rightarrow\omega$ such that $\mathcal{WR}^\pi\sqsubseteq\mathcal{I}$. Define $g:\omega\rightarrow\{2i:i\in\omega\}$ by $g(n)=2n$. Notice that $g^{-1}(n)=\frac{n}{2}$. Let $f:g[\omega]\rightarrow\omega\times\omega$ be a $1-1$ function such that $f\left[g\left[X_n\right]\right]$ is contained in $\left\{n\right\}\times\omega$ but not equal to it. Let also $\pi:\omega\times\omega\rightarrow\omega$ be a bijection such that $\pi^{-1}\upharpoonright g[\omega]=f$ and $\pi^{-1}\upharpoonright (\omega\setminus g[\omega])$ is any bijection between $\omega\setminus g[\omega]$ and $(\omega\times\omega)\setminus f[g[\omega]]$. By Lemma \ref{dense} to show that $\mathcal{WR}^\pi\sqsubseteq\mathcal{I}$ it suffices to find a $1-1$ function witnessing that $\mathcal{WR}^\pi\leq_{K}\mathcal{I}$. Define a $1-1$ function $\sigma:\omega\rightarrow\omega\times\omega$ by $\sigma(n)=f(g(n))$ (so $\sigma(n)=f(2n)$). We will show that $\sigma$ witnesses that $\mathcal{WR}^\pi\sqsubseteq\mathcal{I}$.\\
Firstly, observe that $\sigma^{-1}[\left\{n\right\}\times\omega]=X_n\in\mathcal{I}$. If $G\cap \sigma[\omega]=\left\{g_0\sqsubset g_1\sqsubset\ldots\right\}$ is such that $\pi(g_i)<\pi(g_j)$ and $g_j\sqsupseteq (\pi(g_i),0)$, for $i<j$, then define $h:\omega\rightarrow\omega$ by $h(n)=\sigma^{-1}(g_{n+1})$. Notice that $h(n)=g^{-1}\left(f^{-1}(g_{n+1})\right)=g^{-1}\left(\pi(g_{n+1})\right)$, since $f^{-1}(g_{n+1})\in g[\omega]$ and $\pi^{-1}\upharpoonright g[\omega]=f$. Therefore
$$h(n)=g^{-1}\left(\pi(g_{n+1})\right)<g^{-1}\left(\pi(g_{n+2})\right)=h(n+1).$$
Observe also that $0\notin h[\omega]$ since $h(0)=g^{-1}\left(\pi(g_{1})\right)>g^{-1}\left(\pi(g_{0})\right)\geq 0$. Finally, notice that
$$g_{n+2}\sqsupseteq (\pi(g_{n+1}),0)=(f^{-1}(g_{n+1}),0)\sqsupset (\frac{f^{-1}(g_{n+1})}{2},0)=(h(n),0).$$
Hence, $g_{n+2}\in\bigcup_{i>h(n)}\{i\}\times\omega$ and $h(n+1)=\sigma^{-1}(g_{n+2})\in\bigcup_{i>h(n)}X_i$ since $X_i=\sigma^{-1}[\left\{i\right\}\times\omega]$. Therefore, $\sigma^{-1}[G]=h[\omega]\cup\{\sigma^{-1}[\{g_0\}]\}\in\mathcal{I}$, which concludes the entire proof.
\end{proof}

\textbf{Acknowledgments.}
Results of this paper grew out of a question raised by Piotr Szuca during the seminar at the Institute of Mathematics of the University of Gdańsk. The author would like to express his gratitude to Piotr Szuca and Piotr Zakrzewski for some helpful discussions.

\end{document}